\documentclass{amsart} % please use amsart at 11pt
\usepackage{xcolor}
\usepackage{amssymb,latexsym}
%\usepackage{cite} % to get Refs. [1,2,3] typeset as [1--3] automatically

%\usepackage[height=190mm,width=130mm]{geometry} % this is the journal
                                % text area size

\usepackage[OT2,T1]{fontenc}

\theoremstyle{plain}
\newtheorem{theorem}{Theorem}[section]

\newtheorem{corollary}[theorem]{Corollary}
\newtheorem{proposition}[theorem]{Proposition}

\theoremstyle{definition}
\newtheorem{definition}{Definition}[section]
\newtheorem{example}{Example}[section]
\newtheorem{question}{Question}[section]

\theoremstyle{remark}
\newtheorem{remark}{Remark}[section]

\DeclareMathOperator{\co}{co}%
\DeclareMathOperator{\diam}{diam}%
\newcommand{\N}{\mathbf N}

\newcommand{\R}{\mathbf R}

\newcommand{\f}{\varphi}
\newcommand{\e}{\varepsilon}

\newcommand{\skp}[2]{\left<#1,#2\right>}
\newcommand{\C}{\mathbf C}
\renewcommand{\a}{\alpha}

\newcommand{\NN}{\mathcal G}
\newcommand{\nn}{g}

 % an example of defining a custom
                                % mathematical function

\numberwithin{equation}{section} % to get equations numbered
                                % automatically according to section
                                % number as (1.1), (1.2), etc

\begin{document}
\title[Measures of noncompactness]{Measures of noncompactness on the standard Hilbert $C^*$-module} % please provide
                                % an abbreviated title

\author{Dragoljub J. Ke\v cki\' c}
\address{University of Belgrade\\ Faculty of Mathematics\\ Student\/ski trg 16-18\\ 11000 Beograd\\ Serbia}

\email{keckic@matf.bg.ac.rs}

\author{Zlatko Lazovi\'c}

\address{University of Belgrade\\ Faculty of Mathematics\\ Student\/ski trg 16-18\\ 11000 Beograd\\ Serbia}

\email{zlatkol@matf.bg.ac.rs}

\thanks{The authors was supported in part by the Ministry of education and science, Republic of Serbia, Grant \#174034.}

\begin{abstract}
We define a measure of noncompactness $\lambda$ on the standard Hilbert $C^*$-module $l^2(\mathcal A)$ over a unital $C^*$-algebra, such that $\lambda(E)=0$ if and only if $E$ is $\mathcal A$-precompact (i.e.\ it is $\e$-close to a finitely generated projective submodule for any $\e>0$) and derive its properties. Further, we consider the known, Kuratowski, Hausdorff and Istr\u{a}\c{t}escu measure of noncomapctnes on $l^2(\mathcal A)$ regarded as a locally convex space with respect to a suitable topology, and obtain their properties as well as some relationship between them and introduced measure of noncompactness $\lambda$.
\end{abstract}

%\dedicatory{This paper is dedicated to Professor X on his 125th birthday.}

\subjclass[2010]{Primary: 46L08, Secondary: 47H08, 54E15}

\keywords{Hilbert module, measures of noncompactness, uniform spaces}

\maketitle

\section{Introduction}

Measures of noncompactness (MNCs in further) have been studied almost for a century. Roughly speaking, a MNC
is a function which assigns a real number to any bounded set in a given metric space, and this real number
can be regarded as a characteristic of the extent to which $A$ is not totally bounded, (that is relatively compact when completeness is supposed). There are many different MNCs on metric spaces, among them the most cited are: Kuratowski, Hausdorf and Istr\u{a}\c{t}escu
MNC. Their definitions is given by:

\begin{definition} Let $(M,d)$ be a metric space, and let $A\subseteq M$ be a bounded set.

a) The Kuratowski measure of noncompactness of $A$, denoted by $\alpha(A)$, is the infimum of all $d>0$ such
that $A$ admits a partitioning into finitely many subsets whose diameters are less than $d$
(\cite{Kuratowski1}, see also \cite{Kuratowski2}).

b) The Hausdorff measure of noncompactness of $A$, denoted by $\chi(A)$, is the infimum of all $\e>0$ such
that $A$ has a finite $\e$-mash in $M$. (If we require that this $\e$-mash belongs to $A$, we refer to inner
Hausdorff measure of noncompactness, denoted by $\chi_i(A)$.)

c) The Istr\u{a}\c{t}escu measure of noncompactness, denoted by $I(A)$ , is the infimum of all $d>0$ such
that there is an $\e$-discrete sequence in $A$, that is, a sequence $x_n\in A$, such that $d(x_m,x_n)\ge\e$
for all $m\neq n$ (\cite{Istratescu}).
\end{definition}

Although different in general, these three functions share some common properties, e.g.\ the following:

\begin{proposition} Let $(M,d)$ be a metric space, and let $\mu$ be any of functions $\alpha$, $\chi$, $I$
defined above. Then $\mu$ has the following properties:

\begin{itemize}

\item[(a)] regularity: $\mu(A)=0$ iff $A$ is relatively compact;

\item[(b)] non-singularity: $\mu$ is equal to zero on any single-element set;

\item[(c)] monotonicity: $\mu(A)\le\mu(B)$, whenever $A\subseteq B$;

\item[(d)] subadditivity: $\mu(A\cup B)=\max\{\mu(A),\mu(B)\}$, for all $A$ and $B$;

\end{itemize}

If, in addition, $M$ is a normed space, i.e., $M$ is a linear space and the metric $d$ is defined via
$d(x,y)=||x-y||$ then $\mu$ has additional properties:

\begin{itemize}

\item[(e)] algebraic semi-additivity: $\mu(A+B)\leq\mu(A)+\mu(B)$, for all $A$, $B$;

\item[(f)] semi-homogeneity: $\mu(tA)=|t|\mu(A)$, for $t\in\C$ and all $A\subseteq M$;

\item[(g)] invariance under translations: $\mu(x+A)=\mu(A)$, for all $A\subseteq M$ and $x\in M$;

\item[(h)] Lipschitz continuity: $|\mu(A)-\mu(B)|\le L d_H(A,B)$, where $L$ is an absolute constant ($L_\alpha=2$, $L_\chi=1$, $L_I=2$) and $d(A,B)$ is the so called Hausdorff distance, that is $d_H(A,B)=\max\{\sup_{x\in A}d(x,B),\sup_{y\in B}d(y,A)\}$;

\item[(i)] invariance to the transition to the closure and to the convex hull: $\mu(A)=\mu(\overline
A)=\mu(\co A)$;

\item[(j)] The functions $\alpha$, $\chi$ and $I$ are equivalent to each other,
that is,
\begin{equation}\label{MNCinequalities}
\chi(A)\leq I(A)\leq\alpha(A)\leq2\chi(A)\quad\mbox{for all bounded }A.
\end{equation}
\end{itemize}

\end{proposition}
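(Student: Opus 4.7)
The plan is to dispatch the elementary properties (a)--(g) directly from the definitions, establish the comparison chain (j), and then handle the nontrivial parts of (h) and (i) by a mixture of direct arguments and appeals to (j).

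First I would verify (a)--(g). For regularity (a), in each case $\mu(A)=0$ reduces to total boundedness of $A$: by definition for $\chi$; for $\alpha$, picking a point in each piece of a sufficiently fine partition produces an $\e$-net; for $I$, any maximal $\e$-discrete subset of $A$ must be finite (since no infinite one exists) and, by maximality, is itself an $\e$-net. Non-singularity (b), monotonicity (c) and the union property (d) are immediate, with (d) obtained by combining two witness families. Algebraic semiadditivity (e) is verified on witness partitions $\{A_i\}$, $\{B_j\}$ via the partition $\{A_i+B_j\}$ of $A+B$, and analogously on nets; the diameter bound $\diam(A_i+B_j)\le\diam A_i+\diam B_j$ yields the estimate. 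Semi-homogeneity (f) and translation invariance (g) are rescalings of witness objects.

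Next I would prove the chain $\chi\le I\le\alpha\le 2\chi$ of (j). For $\alpha\le 2\chi$: a $(\chi(A)+\e)$-net yields a cover of $A$ by balls of diameter $2(\chi(A)+\e)$. For $I\le\alpha$: by the pigeonhole principle, a partition into finitely many pieces of diameter less than $d$ cannot contain an infinite $d$-discrete sequence. For $\chi\le I$: any maximal $(I(A)+\e)$-discrete subset of $A$ is necessarily finite and, by maximality, is an $(I(A)+\e)$-net. For the Lipschitz estimate (h), each MNC admits a direct proof: for $\chi$, enlarging an $\e$-net of $A$ by $\delta=d_H(A,B)$ produces an $(\e+\delta)$-net of $B$, so $L_\chi=1$; for $\alpha$, enlarging each piece of a witness partition by $\delta$ inflates its diameter by at most $2\delta$, so $L_\alpha=2$; for $I$, any $d$-discrete sequence in $B$ pulls back through $d_H(A,B)<\delta$ to a $(d-2\delta)$-discrete sequence in $A$, so $L_I=2$.

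Finally, for (i), closure invariance is immediate since diameters, finite ball covers and discrete sequences are preserved (or approximated) under the closure. The main technical obstacle is convex-hull invariance. For $\chi$, the classical argument runs: given an $\e$-net $\{x_1,\dots,x_n\}$ of $A$, the set $\co\{x_1,\dots,x_n\}$ is the continuous image of the compact simplex $\Delta_n$ and hence compact, so it admits a finite $\e$-net, which is in turn a $2\e$-net of $\co A$; combined with (h) this gives $\chi(\co A)=\chi(A)$. For $\alpha$, one uses the decomposition $\co A=\bigcup_{\lambda\in\Delta_n}\sum_i\lambda_i\co A_i$, partitions $\Delta_n$ into pieces of small mesh, and bounds the diameter of the resulting pieces of $\co A$ via the convex-combination norm estimate together with the identity $\diam\co A_i=\diam A_i$. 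Convex-hull invariance of $I$ then follows from the same simplex-partition scheme in conjunction with (e)--(h). I expect the simplex-covering bookkeeping, balancing its mesh against $\diam A$, to be the principal technical difficulty.
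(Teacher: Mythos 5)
The paper itself does not prove this proposition; it simply records that the properties are classical and points to the literature (Mursaleen et al., Akhmerov et al.), so there is no in-paper argument to compare yours against. Your outline is essentially the standard textbook proof, and most of it is sound: the reductions in (a)--(d), (f), (g), the chain (j) via maximal discrete sets and pigeonhole, and the Lipschitz estimates in (h) are all the usual arguments and are correct as sketched. One caveat on (a): what the three definitions actually characterize is total boundedness, and the stated equivalence with relative compactness needs completeness of $M$ (the paper acknowledges this parenthetically in its introduction), so you should either add that hypothesis or weaken the conclusion.

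There are, however, three concrete soft spots. First, in (e) your witness-partition argument $\{A_i+B_j\}$ and the net argument cover $\alpha$ and $\chi$, but not $I$: a $d$-discrete sequence $z_n=x_n+y_n$ in $A+B$ does not obviously split, and the standard proof extracts, via Ramsey's theorem, an infinite subsequence on which the $x_n$ are pairwise within $I(A)+\e$, forcing the $y_n$ to be $(d-I(A)-\e)$-discrete; deducing (e) for $I$ from (j) only gives a constant $2$. Second, in (i) for $\chi$ your bookkeeping yields only $\chi(\co A)\le 2\e$ for every $\e>\chi(A)$, i.e.\ $\chi(\co A)\le 2\chi(A)$; the fix is to take a $\delta$-net of the compact set $\co\{x_1,\dots,x_n\}$ with $\delta$ independent of $\e$, which gives an $(\e+\delta)$-net of $\co A$ and hence the exact inequality $\chi(\co A)\le\chi(A)$. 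Third, the claim that $I(\co A)=I(A)$ ``follows from the same simplex-partition scheme in conjunction with (e)--(h)'' is not a proof: the chain (j) again only localizes $I(\co A)$ within a factor of $2$ of $I(A)$, and the genuine equality for the separation measure is a separate, nontrivial result that must either be proved directly or cited. None of these invalidates the proposition, but as written the proposal does not close (e) and (i) for $I$, nor the exact constant in (i) for $\chi$.
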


These properties are well known and their proofs can be found throughout literature, for instance in
\cite{MursaleenEtAl}. (Though, it should be mentioned that inner Hausdorff MNC does not satisfy the equality $\chi_i(\co A)=\chi_i(A)$, see \cite[page 9]{AkhmerovEtAl})

Some of the mentioned properties were singled out in order to establish the axiomatic definition of the
abstract notion of the MNC, and this was done in various different manners.

For more detailed exposition on MNCs on metric or normed spaces, the reader is referred to \cite{AkhmerovEtAl}, \cite{ToledanoEtAl} or \cite{Volodja}.

There is some extension of the MNC theory to the framework of uniform spaces which will be described in Section 3. Among all uniform spaces, we are specially interested in the standard Hilbert module $l^2(\mathcal A)$ over a unital $C^*$-algebra $\mathcal A$. It is defined by
$$l^2(\mathcal A)=\Big\{x=(\xi_1,\xi_2,\dots)\;|\;\xi_j\in\mathcal A,\sum_{j=1}^{+\infty}\xi^*_j\xi_j\mbox{
converges in the norm topology}\Big\},$$
and it is equipped with the $\mathcal A$-valued inner product
$$l^2(\mathcal A)\times l^2(\mathcal A)\ni(x,y)\mapsto\sum_{j=1}^{+\infty}\xi_j^*\eta_j\in\mathcal A,\qquad x=(\xi_1,\xi_2,\dots),\quad y=(\eta_1,\eta_2,\dots).$$

Since an arbitrary $\mathcal A$-linear bounded operator on $l^2(\mathcal A)$ does not need to have an
adjoint, the natural algebra of operators is $B^a(l^2(\mathcal A))$ - the algebra of all $\mathcal A$-linear
bounded operators on $l^2(\mathcal A)$ having an adjoint. It is known that $B^a(l^2(\mathcal A))$ is a
$C^*$-algebra, as well.

Among all operators in $B^a(l^2(\mathcal A))$, those that belong to the linear span of the operators of the
form $x\mapsto\Theta_{y,z}(x)=z\left<y,x\right>$ ($y$, $z\in l^2(\mathcal A)$) are called {\em finite rank
operators}. The norm closure of finite rank operators is known as the algebra of all "compact" operators. The quotation marks are usually written in order to emphasize the fact that "compact" operators does not maps
bounded sets into relatively compact sets, as it is the case in the framework of Hilbert (and also Banach)
spaces, though they share many properties of proper compact operators on a Hilbert space, \cite{Manuilov1},
\cite{Manuilov2}.

(For general literature concerning Hilbert modules over $C^*$ algebras, including the standard
Hilbert module, the reader is referred to \cite{Lance} or \cite{MT}.)

In our earlier work \cite{KeckicLazovic}, we construct a locally convex topology $\tau$ on $l^2(\mathcal A)$
such that $T\in B^a(l^2(\mathcal A))$ is "compact" implies that its image of the unit ball is totally
bounded with respect to $\tau$. The converse is obtained in the special case $\mathcal A=B(H)$.

%This topology is defined via the family of semi-norms $p_{\f,y}$
%
%\begin{equation}\label{polunorem intro}
%p_{\f,y}(x)=\sqrt{\sum_{j=1}^{\infty}|\f(\eta_j^*\xi_j)|^2},\quad x=(\xi_1,\xi_2,\dots)\in l^2(\mathcal A),
%\end{equation}
%
%where $\f\in\mathcal A_*$ is a normal state and $y=(\eta_1,\eta_2,...)$ is a sequence of elements in
%$\mathcal A$ such that
%
%\begin{equation}\label{uslov za polunormu intro}
%\sup_{j\geq1}\f(\eta_j^*\eta_j)=1.
%\end{equation}

The aim of this note is to introduce a MNC on the standard Hilbert module $l^2(\mathcal A)$ and to derive its properties. In Section 2, we introduce the MNC $\lambda$ such that for any bounded set $E\subseteq l^2(\mathcal A)$ the equality $\lambda(E)=0$ holds if and only if $E$ is precompact in the sense of \cite[Proposition 2.6]{MT}, which implies that the corresponding MNC of a given operator $T$ is equal to $0$ if and only if $T$ is "compact".

In Section 3 we list some known results on MNCs on uniform spaces and also derive some simple generalizations.

In Section 4, we discuss generalizations of Kuratowski, Hausdorff and Istr\u{a}\c{t}escu measures of noncomapctness on $l^2(\mathcal A)$ and obtain their relationship with MNC $\lambda$.

We use the following basic and also simple facts on Hilbert modules, that can be found
throughout the literature. To make proofs more easy we list and prove them.

(F1) Let $z_1\perp z_2$. Then $||z_1+z_2||\ge||z_1||$.

Indeed, we have
$$\skp{z_1+z_2}{z_1+z_2}=\skp{z_1}{z_1}+\skp{z_2}{z_2}\ge\skp{z_1}{z_1},$$
in the order defined by the positive cone in $A$. Therefore
$$||z_1+z_2||^2=||\skp{z_1+z_2}{z_1+z_2}||\ge||\skp{z_1}{z_1}||=||z_1||^2.$$

\smallskip

(F2) Let $M$ be a projective finitely generated submodule of $l^2(\mathcal A)$, and let $x\in l^2(\mathcal A)$ be arbitrary. Then
$$d(x,M)=||x-P_Mx||,$$
where $P_M$ is orthogonal projection onto $M$ with null-space $M^\perp$.

Indeed, by \cite[Theorem 1.4.5]{MT} $M$ is orthogonally complemented, $l^2(\mathcal A)=M\oplus M^\perp$, and there is
$P_M:l^2(\mathcal A)\to l^2(\mathcal A)$ such that $P_M^2=P_M^*=P_M$, the range of $P_M$ is $M$ and the kernel of
$P_M$ is $M^\perp$. Let $y\in M$ be arbitrary. Then
$$||x-y||=||(x-P_Mx)+(P_Mx-y)||\ge||x-P_Mx||,$$
by (F1), because $x-P_Mx\in M^\perp$ and $P_Mx-y\in M$.

\section{Measure of noncompactness $\lambda$}

Throughout this section, $\mathcal A$ will always denote a unital $C^*$-algebra, and its unit will be denoted
by $1$. Also, $l^2(\mathcal A)$ will denote the standard Hilbert $C^*$-module over $\mathcal A$ defined in
the introduction.

In \cite[Proposition 2.6]{MT} $\mathcal A$-precompact sets were defined as those bounded sets $E$ such that
for all $\e>0$ there is a free finitely generated module $M\cong\mathcal A^n$ such that
$$d(E,M):=\sup_{x\in E}d(x,M)=\sup_{x\in E}\inf_{y\in M}d(x,y)<\e.$$

We generalize this notion in the following way.

\begin{definition}\label{DefinicijaLambda}
Let $E\subseteq l^2(\mathcal A)$ be a bounded set. The measure of noncompactness of $E$,
denoted by $\lambda(E)$ is the greatest lower bound of all $\eta>0$ for which there is a  free finitely
generated module $M\le l^2(\mathcal A)$ such that
$$d(E,M):=\sup_{x\in E}\inf_{y\in M}d(x,y)<\eta.$$
\end{definition}

\begin{proposition}\label{Calc}
The measure of noncompactness $\lambda(E)$ can be computed as:
\begin{enumerate}
\item\label{Calc1} $\lambda(E)=\inf_{M\in\mathcal F}\sup_{x\in E}d(x,M)$, where $\mathcal F$ is the set of all free finitely generated modules;
\item\label{Calc2} $\lambda(E)=\lim_{n\to+\infty}\sup_{x\in E}||x-P_nx||=\inf_{n\ge1}\sup_{x\in E}||x-P_nx||$, where
$P_n:l^2(\mathcal A)\to l^2(\mathcal A)$ is given by $P_n(x_1,x_2,\dots)=(x_1,x_2,\dots,x_n,0,0,\dots)$.
\end{enumerate}
\end{proposition}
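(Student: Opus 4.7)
Part (1) is essentially a rewriting of Definition~\ref{DefinicijaLambda}. Setting $f(M):=\sup_{x\in E}d(x,M)$, the definition identifies $\lambda(E)$ with $\inf\{\eta>0:\exists M\in\mathcal F,\ f(M)<\eta\}$, which coincides with $\inf_{M\in\mathcal F}f(M)$ by a routine two-line comparison (any admissible $\eta$ dominates $\inf_M f(M)$, and $f(M)+\e$ is an admissible $\eta$ for every $\e>0$). I would dispose of this item first.

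For part (2), I would start by noting that the sequence $s_n:=\sup_{x\in E}\|x-P_nx\|$ is non-increasing in $n$, since $\ran(P_n)\subseteq\ran(P_{n+1})$ and by (F2) each $\|x-P_nx\|$ equals $d(x,\ran(P_n))$. Hence $\lim_n s_n=\inf_n s_n$. The inequality $\lambda(E)\le\inf_n s_n$ is then immediate from part (1), because $\ran(P_n)\cong\mathcal A^n$ belongs to $\mathcal F$, so (F2) gives $s_n=f(\ran(P_n))\ge\lambda(E)$ for every $n$.

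The real content lies in the reverse bound $\inf_n s_n\le\lambda(E)$. Given an arbitrary $M\in\mathcal F$, I would use $P_nP_Mx\in\ran(P_n)$ together with (F2) and the triangle inequality to write
\[
\|x-P_nx\|=d(x,\ran(P_n))\le\|x-P_nP_Mx\|\le d(x,M)+\|P_Mx-P_nP_Mx\|.
\]
Taking suprema over $E$ gives $s_n\le f(M)+\sup_{x\in E}\|P_Mx-P_nP_Mx\|$, so the proof reduces to showing that the last term tends to $0$ as $n\to\infty$; once that is done, $\inf_n s_n\le f(M)$, and minimising over $M\in\mathcal F$ finishes the job.

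The main obstacle I anticipate is precisely this uniform vanishing, since $P_n\to I$ holds pointwise but not uniformly on arbitrary bounded subsets of $l^2(\mathcal A)$. Here I would exploit the freeness of $M$: fixing a basis $y_1,\dots,y_k$ of $M$, the bounded $\mathcal A$-linear bijection $T:\mathcal A^k\to M$, $(a_i)\mapsto\sum_i a_iy_i$, has a bounded inverse by the Banach open mapping theorem (both spaces being complete, $M$ as a closed submodule of $l^2(\mathcal A)$ via (F2)), so there exists $C>0$ with $\max_i\|a_i\|\le C\|y\|$ for any decomposition $y=\sum_i a_iy_i$. For $y\in K:=P_M(E)$, which is norm-bounded since $\|P_M\|\le1$, this yields
\[
\|y-P_ny\|\le\sum_{i=1}^k\|a_i\|\,\|y_i-P_ny_i\|\le C\|y\|\sum_{i=1}^k\|y_i-P_ny_i\|,
\]
and since each $y_i\in l^2(\mathcal A)$ satisfies $\|y_i-P_ny_i\|\to0$, the finite sum on the right forces $\sup_{y\in K}\|y-P_ny\|\to0$, completing the argument.
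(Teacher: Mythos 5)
Your proof is correct and follows the same skeleton as the paper's: part (1) by unwinding the definition, monotonicity of $s_n=\sup_{x\in E}\|x-P_nx\|$, the easy inequality $\lambda(E)\le\inf_n s_n$ from $\ran(P_n)\in\mathcal F$, and for the converse the decomposition $\|x-P_nx\|\le\|x-P_Mx\|+\|P_Mx-P_nP_Mx\|$ via (F2). The only genuine divergence is in how the term $\sup_{x\in E}\|P_Mx-P_nP_Mx\|$ is made to vanish. The paper cites \cite[Proposition 2.2.1]{MT} to get operator-norm convergence $\|P_M-P_nP_M\|\to0$ (the projection onto a finitely generated projective submodule is a ``compact'' operator), which immediately yields the bound $K\|P_M-P_nP_M\|$ with $K=\sup_{x\in E}\|x\|$. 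You instead prove the required uniform convergence on the bounded set $P_M(E)$ from scratch, expanding in a basis of the free module $M$ and controlling the coefficients via the open mapping theorem applied to $T:\mathcal A^k\to M$ (legitimate, since $M$ is orthogonally complemented by (F2), hence norm-closed in the complete space $l^2(\mathcal A)$). Your route is self-contained and uses only the freeness of $M$; the paper's is shorter but leans on an external fact about ``compact'' operators. One cosmetic point: with the paper's conventions the module action is on the right, so the expansion should read $y=\sum_i y_ia_i$, and the estimate $\|(y_i-P_ny_i)a_i\|\le\|y_i-P_ny_i\|\,\|a_i\|$ uses that $P_n$ is $\mathcal A$-linear; this does not affect the argument.
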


\begin{proof} The equation (\ref{Calc1}) is obvious. The sequence $I-P_n$ is decreasing. Hence $\lim_{n\to+\infty}\sup_{x\in E}||x-P_nx||=\inf_{n\ge1}\sup_{x\in E}||x-P_nx||$. Since
$P_n l^2(\mathcal A)$ is a free finitely generated module, we have immediately
$$\lambda(E)\le\inf_{n\ge1}\sup_{x\in E}||x-P_nx||.$$
To get the opposite inequality, let $\e>0$. Then, there is a free finitely generated module $M$ such that
$d(x,M)<\lambda(E)+\e$. Denote the projection on $M$ by $Q$. By \cite[Proposition 2.2.1.]{MT},
$||Q-P_nQ||\to0$ as $n\to+\infty$. Since $P_nQx\in P_nl^2(\mathcal A)$, we have by (F2)
\begin{align*}
||x-P_nx||=&d(x,P_nl^2(\mathcal A))\le||x-P_nQx||\le\\
    \le&||x-Qx||+||Qx-P_nQx||\le\lambda(E)+\e+K||Q-P_nQ||,
\end{align*}
where $||x||\le K$ for all $x\in E$ ($E$ is bounded). Thus
$$\sup_{x\in E}||x-P_nx||\le\lambda(E)+\e+K||Q-P_nQ||\to\lambda(E)+\e,\qquad\mbox{as }n\to+\infty,$$
which finishes the proof.
\end{proof}

\begin{proposition}\label{AlgPro} The measure of noncompactness $\lambda$ has the following properties
\begin{enumerate}
\item\label{AlgPro1} if $E\subseteq F$ then $\lambda(E)\le\lambda(F)$;

\item\label{AlgPro2} $\lambda(E\cap F)\le\min\{\lambda(E),\lambda(F)\}$;

\item\label{AlgPro3} $\lambda(E\cup F)\le\max\{\lambda(E),\lambda(F)\}$;

\item\label{AlgPro4} $\lambda(E+F)\le\lambda(E)+\lambda(F)$.

\item\label{AlgPro5} $\lambda(Ea)\le\lambda(E)||a||$, where $a\in A$. If, in addition, $a$ is invertible,
then also $||a^{-1}||^{-1}\lambda(E)\le\lambda(Ea)$. In particular, when $a$ is unitary then
$\lambda(Ea)=\lambda(E)$.

\item\label{AlgPro6} $\lambda(\co E)=\lambda(E)$, where $\co E=\{\sum_{i=1}^nt_ix_i\:|\:0\le
t_i\in\R,\sum_{i=1}^nt_i=1,x_i\in E\}$ is the convex hull of $E$.
\end{enumerate}
\end{proposition}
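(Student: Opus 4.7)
The plan is to leverage the second characterization from Proposition~\ref{Calc}, namely
$$\lambda(E)=\lim_{n\to+\infty}\lambda_n(E)=\inf_{n\ge1}\lambda_n(E),\qquad\lambda_n(E):=\sup_{x\in E}\|x-P_nx\|,$$
and to reduce each of the six assertions to a statement about the truncations $\lambda_n$, passing to the limit at the end. The key structural facts are that $P_n$ is an $\mathcal A$-linear adjointable projection (so it commutes with right multiplication by elements of $\mathcal A$ and respects linear combinations) and that $\|I-P_n\|\le 1$.

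Items (\ref{AlgPro1})--(\ref{AlgPro3}) are direct consequences of basic properties of $\sup$. Indeed $E\subseteq F$ gives $\lambda_n(E)\le\lambda_n(F)$, from which (\ref{AlgPro1}) follows upon taking $\inf_n$; (\ref{AlgPro2}) is then immediate from $E\cap F\subseteq E,F$; and (\ref{AlgPro3}) follows from $\lambda_n(E\cup F)=\max\{\lambda_n(E),\lambda_n(F)\}$ combined with the fact that the limit of a max of two convergent sequences is the max of the limits.

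For (\ref{AlgPro4}), write $(x+y)-P_n(x+y)=(x-P_nx)+(y-P_ny)$ using $\mathcal A$-linearity of $P_n$ and apply the triangle inequality to get $\lambda_n(E+F)\le\lambda_n(E)+\lambda_n(F)$, then let $n\to+\infty$. For (\ref{AlgPro5}), the identity $xa-P_n(xa)=(x-P_nx)a$ together with $\|za\|\le\|z\|\|a\|$ yields $\lambda_n(Ea)\le\lambda_n(E)\|a\|$, which gives the first inequality. When $a$ is invertible, apply this inequality to the set $Ea$ and the element $a^{-1}$ to obtain $\lambda(E)=\lambda((Ea)a^{-1})\le\lambda(Ea)\|a^{-1}\|$, giving the stated lower bound; the unitary case is the special case $\|a\|=\|a^{-1}\|=1$.

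For (\ref{AlgPro6}), the inclusion $E\subseteq\co E$ together with (\ref{AlgPro1}) gives $\lambda(E)\le\lambda(\co E)$. For the reverse, any $z=\sum_{i=1}^k t_ix_i\in\co E$ satisfies
$$\|z-P_nz\|=\Bigl\|\sum_{i=1}^k t_i(x_i-P_nx_i)\Bigr\|\le\sum_{i=1}^k t_i\|x_i-P_nx_i\|\le\sup_{x\in E}\|x-P_nx\|=\lambda_n(E),$$
using $\mathcal A$-linearity of $P_n$ and $\sum t_i=1$. Taking $\sup$ over $z\in\co E$ and then $\inf_n$ gives $\lambda(\co E)\le\lambda(E)$. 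There is no real obstacle; the only mild point requiring care is noticing that $P_n$ is $\mathcal A$-linear (so right multiplication by $a\in\mathcal A$ and convex combinations pass through $P_n$), which is what makes all the pointwise estimates on $\|x-P_nx\|$ transfer cleanly to the supremum and hence to $\lambda$.
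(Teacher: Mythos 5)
Your proof is correct and follows essentially the same route as the paper: reduce every item to the truncations $\sup_{x\in E}\|x-P_nx\|$ via Proposition~\ref{Calc}-(\ref{Calc2}), use $\mathcal A$-linearity of $P_n$ for the triangle-inequality and convex-combination estimates, and handle the invertible case of (\ref{AlgPro5}) by applying the first inequality to $(Ea)a^{-1}$. No issues.
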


\begin{proof} (\ref{AlgPro1}) It is obvious.

(\ref{AlgPro2}) This follows from (\ref{AlgPro1}).

(\ref{AlgPro3}) Let $d=\max\{\lambda(E),\lambda(F)\}$. Then for all $x\in E$, as well as for all $x\in F$, we
have $||x-P_nx||\le d+\e$, for all $\e>0$ and $n$ large enough. Hence the result.

(\ref{AlgPro4}) Let $z\in E+F$. Then $z=x+y$ for some $x\in E$, $y\in F$. We have
$$||z-P_nz||\le||x-P_nx||+||y-P_ny||\le\lambda(E)+\e+\lambda(F)+\e,$$
for $n$ large enough.

(\ref{AlgPro5}) Any $z\in Ea$ is of the form $z=ya$ for some $y\in E$. Therefore
$||z-P_nz||\le||y-P_ny||\,||a||$ and the inequality follows by taking a limit. If $a$ has the inverse
$a^{-1}$, then $E=(Ea)a^{-1}$ and by previous $\lambda(E)\le\lambda(Ea)||a^{-1}||$.

(\ref{AlgPro6}) Let $x\in\co E$. Then $x=\sum_{i=1}^nt_ix_i$ for some $x_i\in E$, and positive $t_i$ such
that $\sum_{i=1}^nt_i=1$. We have
$$||x-P_nx||=\Big\|\sum_{i=1}^nt_i(x_i-P_nx_i)\Big\|\le\sum_{i=1}^nt_i||x_i-P_nx_i||\le\sup_{x\in E}||x-P_nx||.$$
Thus, $\sup_{x\in\co E}||x-P_nx||\le\sup_{x\in E}||x-P_nx||$. The opposite inequality is obvious. The
required follows from Proposition \ref{Calc}-(\ref{Calc2}).
\end{proof}

\begin{proposition}\label{UnitBall} Let $B$ denote the unit ball in $l^2(\mathcal A)$. Then $\lambda(B)=1$.
\end{proposition}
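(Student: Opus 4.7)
The plan is to exploit Proposition \ref{Calc}(\ref{Calc2}), which reduces the computation of $\lambda(B)$ to the single infimum
$$\lambda(B)=\inf_{n\ge1}\sup_{x\in B}\|x-P_nx\|,$$
and then to establish the two inequalities $\lambda(B)\le1$ and $\lambda(B)\ge1$ separately.

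For the upper bound, I would take any $x\in B$ and decompose it as $x=P_nx+(I-P_n)x$. Since $P_nx\perp(I-P_n)x$, fact (F1) from the introduction gives $\|x\|\ge\|(I-P_n)x\|=\|x-P_nx\|$, so that $\sup_{x\in B}\|x-P_nx\|\le1$ for every $n$; passing to the infimum yields $\lambda(B)\le1$. (Alternatively one could invoke $\|I-P_n\|\le1$, but grounding it in (F1) keeps everything internal to the paper.)

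For the lower bound, I would exhibit, for each $n\ge1$, an explicit element of $B$ which is as far as possible from the range of $P_n$. The natural candidate is the standard unit vector $e_{n+1}=(0,\dots,0,1,0,\dots)$ with the unit $1\in\mathcal A$ of the algebra in the $(n+1)$-th coordinate. Its $\mathcal A$-valued inner product with itself equals $1$, so $\|e_{n+1}\|=1$ and therefore $e_{n+1}\in B$. On the other hand $P_n e_{n+1}=0$, so $\|e_{n+1}-P_n e_{n+1}\|=\|e_{n+1}\|=1$. Hence $\sup_{x\in B}\|x-P_nx\|\ge1$ for every $n$, and taking the infimum gives $\lambda(B)\ge1$.

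I do not expect any genuine obstacle: once Proposition \ref{Calc}(\ref{Calc2}) is available, the argument is a two-line upper bound via (F1) together with an explicit witness for the lower bound. The only subtlety worth noting is that one really does need the unitality of $\mathcal A$ in order for the vector $e_{n+1}$ to have norm exactly $1$; without a unit one would instead approximate by vectors $(0,\dots,0,a,0,\dots)$ with $a\in\mathcal A$, $\|a\|=1$, but since the standing hypothesis of the section is that $\mathcal A$ is unital, this causes no difficulty.
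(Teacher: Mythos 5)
Your proof is correct, and it takes a slightly different route from the paper's. The paper argues directly from Definition \ref{DefinicijaLambda}: the upper bound comes from the observation that every submodule contains $0$, and for the lower bound it takes an \emph{arbitrary} free finitely generated submodule $M$, produces a nontrivial $y\in M^\perp$ (using that $M$ is orthogonally complemented and $M\neq l^2(\mathcal A)$), rescales it to an element $\delta\|y\|^{-1}y$ of $B\cap M^\perp$, and uses orthogonality to conclude $d(B,M)\ge\delta$ for every $\delta<1$. You instead funnel everything through Proposition \ref{Calc}-(\ref{Calc2}), which lets you restrict attention to the projections $P_n$; this buys you a completely explicit witness $e_{n+1}$ for the lower bound (no existence argument for an element of $M^\perp$, no $\delta$-rescaling, since $\|e_{n+1}\|=1$ exactly thanks to unitality, which you correctly flag as the point where unitality enters). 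The paper's version is marginally more self-contained in that it never invokes Proposition \ref{Calc}, and it yields the slightly sharper-looking statement $d(B,M)\ge\delta$ for \emph{every} free finitely generated $M$ directly --- though of course Proposition \ref{Calc} already guarantees the two infima agree, so the end results are identical. Both arguments are sound.
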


\begin{proof} Any submodule contains the origin. Hence $\lambda(B)\le 1$. Let $0<\delta<1$, and let $M$ be
some free finitely generated submodule of $l^2(\mathcal A)$. Then, there is nontrivial $y\in M^\perp$ and
$\delta ||y||^{-1}y\in B\cap M^\perp$. We have
$$d(B,M)\ge d(\delta||y||^{-1}y,M).$$
However, $\delta||y||^{-1}y\perp M$ which implies that for all $x\in M$ we have
$$||\delta||y||^{-1}y-x||^2=\delta^2+||x||^2\ge\delta^2.$$
Hence $d(B,M)\ge\delta$. Thus $\lambda(B)\ge\delta$.
\end{proof}

\begin{corollary}\label{Cor} If $E\subseteq F+\delta B$ then $\lambda(E)\le\lambda(F)+\delta$.
\end{corollary}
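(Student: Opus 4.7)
The plan is to deduce the bound from the algebraic properties already established in Proposition \ref{AlgPro} together with the value $\lambda(B)=1$ from Proposition \ref{UnitBall}. The inclusion $E\subseteq F+\delta B$ suggests combining monotonicity with algebraic semi-additivity for the Minkowski sum, followed by a scaling estimate for $\lambda(\delta B)$.

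First, I would apply Proposition \ref{AlgPro}-(\ref{AlgPro1}) to obtain $\lambda(E)\le\lambda(F+\delta B)$, and then Proposition \ref{AlgPro}-(\ref{AlgPro4}) to get
\begin{equation*}
\lambda(E)\le\lambda(F)+\lambda(\delta B).
\end{equation*}
It then suffices to show $\lambda(\delta B)\le\delta$. For this, observe that $\delta B=B\cdot(\delta\cdot 1)$ where $\delta\cdot 1\in\mathcal A$, so Proposition \ref{AlgPro}-(\ref{AlgPro5}) combined with Proposition \ref{UnitBall} gives
\begin{equation*}
\lambda(\delta B)=\lambda(B(\delta\cdot 1))\le\lambda(B)\,\|\delta\cdot 1\|=\delta,
\end{equation*}
which yields the desired inequality.

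As a simpler alternative (essentially the same content, but avoiding invocation of Proposition \ref{UnitBall}), one can argue directly via Proposition \ref{Calc}-(\ref{Calc2}): given $\e>0$, pick $n$ large enough so that $\sup_{y\in F}\|y-P_ny\|<\lambda(F)+\e$; then for any $z\in E$, write $z=y+\delta b$ with $y\in F$, $b\in B$, and use $\|I-P_n\|\le 1$ to estimate
\begin{equation*}
\|z-P_nz\|\le\|y-P_ny\|+\delta\|b-P_nb\|\le\lambda(F)+\e+\delta.
\end{equation*}
Taking the supremum over $z\in E$, then the infimum over $n$, and finally letting $\e\to 0$ yields $\lambda(E)\le\lambda(F)+\delta$.

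There is no real obstacle here; the only thing to be mindful of is the trivial case $\delta=0$ (covered by monotonicity alone) and the fact that in Proposition \ref{AlgPro}-(\ref{AlgPro5}) the module action is written on the right, so one should record that $\delta B=B\cdot(\delta\cdot 1)$ since $\delta$ is a real (hence central) scalar.
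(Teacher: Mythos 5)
Your first argument is correct and is essentially the paper's own proof (which cites Proposition \ref{AlgPro}-(\ref{AlgPro4}) and Proposition \ref{UnitBall} and leaves the step $\lambda(\delta B)\le\delta$ implicit); you merely make that step explicit via Proposition \ref{AlgPro}-(\ref{AlgPro5}) with $a=\delta\cdot 1$. The direct alternative via Proposition \ref{Calc}-(\ref{Calc2}) is also fine but adds nothing beyond the same estimate.
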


\begin{proof} Immediately follows from Proposition \ref{AlgPro}-(\ref{AlgPro4}) and Proposition
\ref{UnitBall}.
\end{proof}

\begin{proposition}\label{ConPro} The measure of noncompactness $\lambda$ has the following continuity properties:
\begin{enumerate}
\item\label{CnPro1} $|\lambda(E)-\lambda(F)|\le d_H(E,F)=\max\{d(E,F),d(F,E)\}$ ($d_H$ stands for the so called
Hausdorf distance.)

\item\label{CnPro2} $\lambda(E)=\lambda(\overline E)$ ($\overline E$ stands for the norm closure of $E$);

\item\label{CnPro3} $\lambda(E)=0$ iff $E$ is $\mathcal A$-precompact;

\item\label{CnPro4} $\lambda(E)\le\sup_{x\in E}||x||$.
\end{enumerate}
\end{proposition}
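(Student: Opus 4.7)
The plan is to reduce everything to tools already established: the monotonicity and subadditivity from Proposition \ref{AlgPro}, the explicit formula in Proposition \ref{Calc}-(\ref{Calc2}), and most importantly the $\delta$-perturbation bound of Corollary \ref{Cor}.

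Items (\ref{CnPro3}) and (\ref{CnPro4}) are essentially bookkeeping. For (\ref{CnPro3}), I simply note, via Proposition \ref{Calc}-(\ref{Calc1}), that $\lambda(E)=0$ says exactly that $\inf_M d(E,M)=0$ over all free finitely generated submodules $M$, which is verbatim the $\mathcal A$-precompactness condition recalled just before Definition \ref{DefinicijaLambda}. For (\ref{CnPro4}), I invoke Proposition \ref{Calc}-(\ref{Calc2}): each $I-P_n$ is a self-adjoint projection of norm at most $1$, so $||x-P_nx||\le||x||$, whence
\[\lambda(E)\le\sup_{x\in E}||x-P_1x||\le\sup_{x\in E}||x||.\]

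For (\ref{CnPro1}), fix any $\delta>d_H(E,F)$. Then $d(E,F)<\delta$, so for every $x\in E$ there is some $y\in F$ with $||x-y||<\delta$, i.e.\ $E\subseteq F+\delta B$; symmetrically $F\subseteq E+\delta B$. Two applications of Corollary \ref{Cor} give $\lambda(E)\le\lambda(F)+\delta$ and $\lambda(F)\le\lambda(E)+\delta$, so $|\lambda(E)-\lambda(F)|\le\delta$, and letting $\delta\downarrow d_H(E,F)$ yields (\ref{CnPro1}). For (\ref{CnPro2}), the inclusion $E\subseteq\overline E$ together with Proposition \ref{AlgPro}-(\ref{AlgPro1}) gives $\lambda(E)\le\lambda(\overline E)$. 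Conversely, for each $\e>0$ every element of $\overline E$ lies within $\e$ of some element of $E$, hence $\overline E\subseteq E+\e B$, and Corollary \ref{Cor} gives $\lambda(\overline E)\le\lambda(E)+\e$; sending $\e\to 0$ finishes it.

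None of the steps poses a real obstacle; the genuine content was already absorbed into Propositions \ref{AlgPro}, \ref{UnitBall} and Corollary \ref{Cor}. The only mild care is choosing $\delta$ strictly larger than $d_H(E,F)$ in part (\ref{CnPro1}), so that the infimum defining $d(x,F)$ is realized by an honest point within distance $\delta$ before one passes to the limit.
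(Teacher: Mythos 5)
Your proof is correct and follows essentially the same route as the paper: items (\ref{CnPro1}) and (\ref{CnPro2}) are reduced to Corollary \ref{Cor} via the inclusions $E\subseteq F+\delta B$, while (\ref{CnPro3}) and (\ref{CnPro4}) are read off from Proposition \ref{Calc} (the paper handles (\ref{CnPro4}) by scaling the unit ball instead of using $\|x-P_1x\|\le\|x\|$, but this is an inessential variation). Your extra care in taking $\delta$ strictly larger than $d_H(E,F)$ in part (\ref{CnPro1}) is in fact slightly more rigorous than the paper's direct choice $d=d_H(E,F)$.
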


\begin{proof} (\ref{CnPro1}) Let $d=d_H(E,F)$. Then $E\subseteq F+dB$ and by Corollary \ref{Cor}
$$\lambda(E)\le\lambda(F)+d,\qquad\mbox{i.e.}\qquad\lambda(E)-\lambda(F)\le d.$$
Similarly, $F\subseteq E+dB$ implying $\lambda(F)-\lambda(E)\le d$.

(\ref{CnPro2}) As it is easy to see $d_H(E,\overline E)=0$ we can apply the previous item.

(\ref{CnPro3}) Follows directly from the definition.

(\ref{CnPro4}) Follows from $E\subseteq(\sup_{x\in E}||x||)\cdot B$.
\end{proof}

\begin{example} In Proposition \ref{AlgPro}-(\ref{AlgPro5}) the strict inequalities might hold.

Indeed, let the algebra $\mathcal A$ contain a nontrivial projection, say $p$, and let $\mathcal A(1-p)$ is
isomorphic to $\mathcal A$. (For instance $\mathcal A=L^\infty(0,1)$ and $p=\chi_{[0,1/2]}$.) Let
$$E=\Big\{(a_1p+b_1(1-p),a_2p+b_2(1-p),\dots)\:|\:\sum_{n=1}^{\infty}a_n^*a_n\leq1,\sum_{n=1}^{\infty}b_n^*b_n\leq4\Big\}$$
and let $a=p$. Then $||p||=1$ and $\lambda(E)\ge2$, since $E$ contains a copy of a ball of radius $2$ in
$l^2(\mathcal A)$ (when $a_j=0$). On the other hand $\lambda (Ep)\le1$. Indeed, $Ep$ is contained in the unit
ball of $l^2(\mathcal A)$.
\end{example}

Finally, we want to define MNC of an operator $T\in B^a(l^2(\mathcal A))$. As it is expected, it will be the
MNC of its image of the unit ball.

\begin{definition}\label{DefinicijaLambda0}
Let $T\in B^a(l^2(\mathcal A))$ be an adjointable operator. We set
$$\lambda_0(T)=\lambda(T(B_1)),$$
where $B_1$ is the unit ball in $l^2(\mathcal A)$.
\end{definition}

\begin{proposition} The function $\lambda_0$ has the following properties:

\begin{itemize}

\item[(a)] $\lambda_0$ is subadditive, i.e.
$$\lambda_0(T_1+T_2)\leq\lambda_0(T_1)+\lambda_0(T_2);$$

\item[(b)] $\lambda_0$ is positively homogeneous, i.e.
$$\lambda_0(cT)=c\lambda_0(T),$$
for all $c>0$ and all $T\in B^a(l^2(\mathcal A))$;

\item[(c)] $\lambda_0(T)\leq||T||$, for all $T\in B^a(l^2(\mathcal A))$;
\end{itemize}
\end{proposition}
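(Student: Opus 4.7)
The three claims all reduce to applying the properties of $\lambda$ proved in Proposition \ref{AlgPro} and Proposition \ref{UnitBall} to the sets $T(B_1)$, $(T_1+T_2)(B_1)$, $(cT)(B_1)$. The only facts needed about $T$ are the set-theoretic inclusions coming from linearity and boundedness, so the argument is a short bookkeeping exercise and there is no serious obstacle; the only place that requires a moment of thought is the positive homogeneity, where one has to reinterpret multiplication by a positive scalar as right multiplication by an invertible element of $\mathcal A$ in order to apply Proposition \ref{AlgPro}-(\ref{AlgPro5}).

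For (a), I would start from the inclusion $(T_1+T_2)(B_1)\subseteq T_1(B_1)+T_2(B_1)$, which is immediate from the definition of the sum of operators. Applying monotonicity (Proposition \ref{AlgPro}-(\ref{AlgPro1})) and algebraic semi-additivity (Proposition \ref{AlgPro}-(\ref{AlgPro4})) gives
$$\lambda_0(T_1+T_2)=\lambda((T_1+T_2)(B_1))\le\lambda(T_1(B_1)+T_2(B_1))\le\lambda_0(T_1)+\lambda_0(T_2).$$

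For (b), I would observe that $(cT)(B_1)=c\,T(B_1)=T(B_1)\cdot(c\cdot 1)$, where $c\cdot 1\in\mathcal A$. Since $c>0$, the element $c\cdot 1$ is invertible with $\|c\cdot 1\|=c$ and $\|(c\cdot 1)^{-1}\|=c^{-1}$. Proposition \ref{AlgPro}-(\ref{AlgPro5}) then gives both $\lambda(T(B_1)\cdot(c\cdot 1))\le c\lambda(T(B_1))$ and $c^{-1}\lambda(T(B_1))\le\lambda(T(B_1)\cdot(c\cdot 1))$, whence $\lambda_0(cT)=c\lambda_0(T)$.

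For (c), I would use that $T(B_1)\subseteq\|T\|\cdot B_1$, since $\|Tx\|\le\|T\|\|x\|\le\|T\|$ for $x\in B_1$. Combining monotonicity, the homogeneity just established, and $\lambda(B_1)=1$ from Proposition \ref{UnitBall} yields
$$\lambda_0(T)=\lambda(T(B_1))\le\lambda(\|T\|\cdot B_1)=\|T\|\,\lambda(B_1)=\|T\|.$$
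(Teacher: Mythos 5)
Your proof is correct and is exactly the ``direct verification'' that the paper leaves to the reader: reduce each claim to Proposition \ref{AlgPro} and Proposition \ref{UnitBall} applied to $T(B_1)$. All three steps check out, including the reinterpretation of scalar multiplication as right multiplication by the invertible element $c\cdot 1$ in part (b).
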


\begin{proof} Direct verification.
\end{proof}

We will be able to say more on the MNC $\lambda_0$ in section 4.

\section{Measure of noncompactness on uniform spaces - known results}

Let us recall some basic definitions and facts concerning uniform spaces. For
more details see \cite{Sadovskii} or \cite{Bourbaki}.

Uniform spaces are those topological spaces in which one can deal with notions such as Cauchy sequence,
Cauchy net or uniform continuity. Although it is usual to define them as spaces endowed with a family of sets
in $X\times X$ given as some kind of neighborhoods of the diagonal, so called entourages, for our purpose it
is more convenient to give an equivalent definition, via a family of semi-metrics.

\begin{definition}  A nonempty set endowed with a family of semi-metrics, functions
$d_{\alpha}\colon X\times X\rightarrow[0,+\infty)$ satisfying (i) $d_{\alpha}(x, y)\geq0;$ (ii) $d_{\alpha}(x,y)=d_{\alpha}(y,x);$ (iii)
$d_{\alpha}(x, z)\leq d_{\alpha}(x, y)+d_{\alpha}(y, z)$ is called a uniform space.
\end{definition}

\begin{remark} There is some ambiguity in literature; sometimes, functions $d_\a$ from the previous
definition are called "pseudo-metrics", whereas the term "semi-metric" is reserved for a different notion.
\end{remark}

All $d_{\alpha}$ are metrics except they do not distinguish points, i.e.\ there might be $d_{\alpha}(x,y)=0$
for some $x=y$. However it is provided that for all $x=y$ there is an $\alpha$ such that $d_{\alpha}(x, y) >
0$. The family of sets $B_{d_{\alpha}}(x;\varepsilon)=\{y\in X| d_{\alpha}(x,y) < \varepsilon\}$ makes a
basis for some topology. It is well known that a topological space X is a uniform space if and only if it is
completely regular.

%Let X be a uniform space. For a net $x_i\in X$ we say that it is a Cauchy net if it is a Cauchy net with
%respect to all $d_{\alpha}$, i.e. if for all $\alpha$ and for all $\varepsilon>0$ there is $i_0$ such that
%for all $i,j > i_0$ we have $d_{\alpha}(x_i,x_j)<\varepsilon$. The notion of a complete uniform space is
%defined in an obvious way. A set $A\subset X$ is called totally bounded, if for all $\varepsilon>0$ and all
%$\alpha$ there is a finite set $c_1, c_2, . . . , c_m\in X$ such that $B_{\alpha}(c_j;\varepsilon)=\{y\in %X|
%d_{\alpha}(c_j,y)<\varepsilon \}$ cover $A$. It is well known that any relatively compact set is totally
%bounded, and that the converse is true provided that $X$ is complete. If X is not complete then there are
%totally bounded sets that are not relatively compact, for instance, $\mathbb{Q}\cap [0, 1]$ as a subset of
%$Q$ (see also \cite[Remark 4.2.2]{Bourbaki}).

Any locally convex topological vector space is a uniform space. Indeed, there is a family of semi-norms
generating its topology. This family can be obtained by Minkowski functionals of basic neighborhoods of zero. And an arbitrary semi-norm define a semi-metric in a natural way. Conversely, any family of semi-norms that
distinguishes points leads to a locally convex Hausdorff topological vector space.% Hence a family of
%semi-norms allows to deal with notions: totally bounded set, complete space, Cauchy net, etc.

We point out two generalizations of the notion of MNC to the framework of uniform spaces, i.e, those
topological spaces that arise from the family of semi-metrics.

Sadovskii \cite{Sadovskii} considered a uniform space $X$ and a family $P$ of semi-metrics that are uniformly continuous on $X\times X$. Starting with Kuratowski and Hausdorff MNC on a semi-metric space (which is defined exactly as on a metric space), Sadovskii defined corresponding MNCs $\alpha$, $\chi:\Upsilon\to\NN$, where $\Upsilon$ denotes the family of all subsets that are bounded with respect to any semi-metric $p\in P$ on the given uniform space, and $\NN$ denotes the set of functions $\nn:P\rightarrow[0,+\infty)$ with uniformity generated by pointwise convergence and the natural partial ordering: $\nn_1\leq \nn_2\Leftrightarrow (\forall p\in P)\;\nn_1(p)\leq \nn_2(p)$. Their definitions are
$$[\alpha(E)](p)=\inf\Big\{d > 0\;:\;E=\bigcup_{j=1}^mE_j,\mbox{ for some }E_j,\diam(E_j)<d\Big\},$$
$$[\chi(E)](p)=\inf\Big\{\varepsilon>0\;:\;E\subseteq\bigcup_{j=1}^mB_p(x_j;\varepsilon)\mbox{ for some }x_j\in X\Big\}.$$
For such defined $\a$ and $\chi$, in \cite[\S1.2.3. and \S1.2.5.]{Sadovskii}, the following is proved,
provided that the family $P$ generates the topology on $X$.

\begin{theorem}\label{osobine mere nekompaktnosti}
The Kuratowski and Hausdorff measures of noncompactness ($\mu=\alpha$ or $\mu=\chi$) have the following
properties:

\begin{itemize}
\item[(a)] $\mu$ is non-singular, that is, they are zero on any
single-element set;

\item[(b)] $\mu$ is continuous, that is, for all $E\in\Upsilon$, $p\in P$ and $\varepsilon>0$ there is an
entourage $V$ in $X$ such that for all $E_1$ that is $V$-close to $E$ there holds
$|\mu(E_1)(p)-\mu(E)(p)|<\varepsilon$;

\item[(c)]$\mu$ is semi-additive, that is, for all $E_1$, $E_2$ we have
$$\mu(E_1\cup E_2)=\max\{\mu(E_1),\mu(E_2)\};$$

\item[(d)] The function is algebraically semi-additive, that is,
$$\mu(E_1+E_2)(p)\leq\mu(E_1)(p)+\mu(E_2)(p)\quad\mbox{for all }E_1,E_2\in\Upsilon;$$

\item[(e)] $\mu$ is invariant under shifts, that is,
$$\mu(x+E)=\mu(E)\quad\mbox{for all }E\in\Upsilon, x\in X;$$

\item[(f)] $\mu$ is invariant under the transition to its closure and to the convex hull of
the set, that is,
$$\mu(\co E)=\mu(\overline E)=\mu(E)\quad\mbox{for all }E\in\Upsilon;$$

\item[(g)] $\mu$ is uniformly continuous, that is, for all $p\in P$ and $\varepsilon>0$ there is an entourage
$V$ in $X$ such that for all $V$-close $E_1$ and $E_2$ there holds $|\mu(E_1)(p)-\mu(E_2)(p)|<\varepsilon$;

\item[(h)] The functions $\alpha$, $\chi$ and $I$ are equivalent to each other,
that is,
$$\chi(\Omega)\leq I(\Omega)\leq\alpha(\Omega)\leq2\chi(\Omega)\quad\mbox{for all}\,\,\Omega\in\Upsilon.$$
\end{itemize}
\end{theorem}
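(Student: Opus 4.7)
The plan is to exploit the definition of $\NN$ as the family of pointwise-ordered real-valued functions on $P$: every assertion of the theorem is really a collection of real-valued assertions, one for each $p\in P$. Hence for each fixed $p$ I would reduce the statement to the corresponding classical claim for $\alpha$ or $\chi$ on the semi-metric space $(X,p)$, where the standard metric-space proofs apply verbatim (the fact that $p$ may not separate points never enters the arguments for $\alpha$ or $\chi$). With the family $P$ generating the topology on $X$, closure- and continuity-type statements translate correctly between the uniform and the semi-metric settings.

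Given that reduction, properties (a)--(e) are dispatched by standard arguments: singletons have $p$-diameter $0$ and are covered by a single $p$-ball of arbitrarily small radius; unions of witnessing covers prove (c); pointwise sums of covers, together with the triangle inequality for $p$, give (d); and the $p$-translation invariance $p(x+z,y+z)=p(x,y)$ yields (e). The closure half of (f) follows from $\diam_p(\overline A)=\diam_p(A)$ and the fact that an $\e$-net for $E$ is, after an arbitrarily small enlargement, an $\e$-net for $\overline E$.

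The main work is in (g). I would fix $p\in P$ and $\e>0$, take the entourage $V=\{(x,y):p(x,y)<\e/2\}$, and observe that $V$-closeness of $E_1,E_2$ forces their $p$-Hausdorff distance to be at most $\e/2$; the classical Lipschitz bounds (with constants $1$ for $\chi$ and $2$ for $\alpha$) then give the required estimate. Property (b) is an immediate pointwise corollary. The convex-hull half of (f) is then handled by the usual Darbo--Sadovskii argument: approximate $\co E$ by finite convex combinations of the form $\sum_i t_i E$ with $t_i$ drawn from a finite grid of partitions of unity, invoke (d) to dominate $\mu$ of each such combination by $(\sum_i t_i)\mu(E)=\mu(E)$, and close up via the uniform continuity (g). Finally, (h) is the classical chain of inequalities---a partition with small $p$-diameter yields centers forming a small $p$-net, an $\e$-discrete sequence in $E$ forces any $p$-cover to contain a large-diameter piece---carried over pointwise in $p$.

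The step I expect to be the main obstacle is checking that the Darbo--Sadovskii argument for $\mu(\co E)=\mu(E)$ uses only ingredients that are genuinely available at the uniform-space level, namely the algebraic semi-additivity (d) and the uniform continuity (g), without any hidden appeal to point separation, completeness, or metric-space compactness. Once this is verified pointwise in $p$, all remaining items follow by transcribing well-known metric-space proofs into the semi-metric setting.
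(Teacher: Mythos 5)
The paper offers no proof of this theorem at all: items (a)--(g) are quoted verbatim from Sadovskii (\S1.2.3 and \S1.2.5 of \cite{Sadovskii}), and the extension to the Istr\u{a}\c{t}escu measure together with item (h) is assembled from further citations (\cite{IvanFilomat}, \cite{Istratescu}, \cite{AkhmerovEtAl}) in the paragraph following the theorem. Your pointwise-in-$p$ reduction to the classical semi-metric arguments is exactly the route those sources take, so your sketch is a faithful reconstruction rather than a genuinely different proof, and it is essentially sound. Two points deserve flagging. First, items (d), (e) and the convex-hull half of (f) presuppose linear structure; the setting in which the theorem is actually used is a locally convex space with $P$ consisting of semi-metrics induced by semi-norms, and your appeals to $p(x+z,y+z)=p(x,y)$ and to $p(x+y,x'+y')\le p(x,x')+p(y,y')$ rely on that, so you should state this hypothesis explicitly. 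Second, your convex-hull argument is the thinnest step: the standard proof does not merely ``approximate $\co E$ by combinations $\sum_i t_iE$'' but runs an induction on the number of sets via the identity $\co(A\cup B)=\bigcup_{t\in[0,1]}\bigl(tA+(1-t)B\bigr)$ for convex $A$, $B$, discretizes $t$ over a finite grid, and controls the error using boundedness of $E$ together with the semi-homogeneity $\mu(tE)(p)=t\,\mu(E)(p)$ --- a property not listed in the theorem but valid pointwise for semi-norm-induced $p$. You correctly identify this as the delicate step and correctly observe that no point separation or completeness is needed, so with these two clarifications the proposal stands.
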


Although it was not done in \cite{Sadovskii}, one can also define the Istr\u{a}\c{t}escu MNC in a similar
way, i.e.
$$[I(E)](p)=\inf\{\varepsilon>0\;:\;E\ni x_n,\;p(x_n,x_m)\ge\varepsilon\mbox{ for all }m\neq n\}.$$
By (\ref{MNCinequalities}) one can easily derive
\begin{equation}\label{MNCinequalities1}
[\chi(E)](p)\leq[I(E)](p)\leq[\alpha(E)](p)\leq2[\chi(E)](p)\quad\mbox{for all bounded }E.
\end{equation}
Also, Theorem \ref{osobine mere nekompaktnosti} hold for $\mu=I$. Part (a) is obvious, parts (d) and (e)
follow from \cite[Proposition 1]{IvanFilomat}, part (c) follows from \cite{Istratescu}, parts (b) and (g)
from (\ref{MNCinequalities1}) and the corresponding properties of $\chi$, and finally (f) can be derived from
\cite[Theorem 1.3.4]{AkhmerovEtAl}.

\smallskip

Arandjelovi\'c \cite{IvanMoravica} dealt with an arbitrary uniform space, and gave the following:

\begin{definition}\label{IvanAxiome}
Let $X$ be a uniform space, metric space or semi-metric space. Any function $\Phi$ defined on the partitive
set of $X$, which satisfies the following:

\begin{itemize}

\item[(1)] $\Phi(E)=+\infty$ if and only if $E$ is unbounded;

\item[(2)] $\Phi(E)=\Phi(\overline E)$;

\item[(3)] from $\Phi(E)=0$ follows that $E$ is totally bounded set;

\item[(4)] from $E\subseteq F$ it follows $\Phi(E)\le\Phi(F)$;

\item[(5)] if $X$ is complete, and if $\{E_n\}_{n\in\N}$ is a sequence of closed subsets of $X$ such that
$E_{n+1}\subseteq E_n$ for each $n\in\N$ and $\lim_{n\to\infty}\Phi(E_n)=0$, then $K=\bigcap_{n\in\N}E_n$ is
a nonempty compact set.

\end{itemize}
is called a measure of noncompactness on $X$.
\end{definition}

\begin{remark}\label{MarininUslov}
Note that the only nontrivial requirement in (5) is that $K$ is nonempty. Moreover, condition (5) can be
replaced by a weaker one - $\Phi(A\cup\{x\})=\Phi(A)$. It was shown in \cite{Marina}, see also \cite{}.
\end{remark}

\begin{theorem}\label{Arandjeloviceva mera}\cite[Theorem 3]{IvanMoravica}
Let $X$ be a uniform space and let $\{d_i|i\in I\}$ be a family of semi-metrics which defines topology on
$X$. Denote by $\mu_i$ arbitrary MNC on the semi-metric space $(X,d_i)$ for each $i\in I$. Then the function
$\mu^*\colon X\rightarrow[0,+\infty]$ defined by
$$\mu^*(E)=\sup_{i\in I} \mu_i(E)$$
for each $E\in X,$ is a measure of noncompactness on $X$.
\end{theorem}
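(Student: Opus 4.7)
The plan is to verify the five axioms of Definition \ref{IvanAxiome} for $\mu^*$ one by one, in each case reducing the claim to the corresponding property of the semi-metric MNCs $\mu_i$ combined with the order-preservation of the supremum. Axioms (1) and (4) I expect to be essentially immediate: boundedness of $E$ in the uniform space is equivalent to $d_i$-boundedness for every $i$, so (1) follows by contraposition from the fact that $\mu_i(E)=+\infty$ iff $E$ is $d_i$-unbounded, and monotonicity of $\mu^*$ is a routine consequence of monotonicity of each $\mu_i$.

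The two more delicate axioms are (2) and (3), both of which are complicated by the fact that the uniform topology on $X$ is finer than the topology induced by any single $d_i$. For (2), this finer-coarser relationship forces the inclusion $\overline{E}\subseteq\overline{E}^{d_i}$, where $\overline{E}^{d_i}$ denotes the $d_i$-closure; sandwiching $E\subseteq\overline{E}\subseteq\overline{E}^{d_i}$ and using monotonicity of $\mu_i$ together with $\mu_i(\overline{E}^{d_i})=\mu_i(E)$ will give $\mu_i(E)=\mu_i(\overline{E})$ for every $i$, after which the identity passes to the sup. For (3), the hypothesis $\mu^*(E)=0$ yields $d_i$-total boundedness of $E$ for every $i$, but an arbitrary uniform entourage is specified by \emph{finitely many} semi-metrics $d_{i_1},\ldots,d_{i_n}$ and some $\e>0$ simultaneously. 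I plan to produce the required cover by refining, via common intersections, the separate $d_{i_k}$-small finite covers of $E$ into a single finite cover whose members have $d_{i_k}$-diameter below $\e$ for every $k\in\{1,\ldots,n\}$. This intersection-of-covers argument is the only place where the proof is not entirely routine.

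For the remaining axiom (5) my plan is to avoid dealing with nested intersections directly and instead invoke Remark \ref{MarininUslov}: in the presence of (1)--(4), axiom (5) can be replaced by the simpler identity $\Phi(A\cup\{x\})=\Phi(A)$. Each $\mu_i$, being an MNC on $(X,d_i)$, satisfies this identity, so
$$\mu^*(A\cup\{x\})=\sup_{i\in I}\mu_i(A\cup\{x\})=\sup_{i\in I}\mu_i(A)=\mu^*(A),$$
and the verification is complete. The main obstacle I anticipate is the careful bookkeeping in axiom (3), where one has to pass from separate $d_i$-total boundedness statements to genuine uniform total boundedness for a single entourage involving finitely many semi-metrics at once; axiom (2) is a close second in subtlety because it hinges on correctly identifying which closure operation each property refers to.
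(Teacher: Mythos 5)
First, a caveat on the comparison itself: the paper does not prove this theorem --- it is quoted verbatim from \cite[Theorem 3]{IvanMoravica} --- so there is no in-paper argument to measure yours against, and I am judging the proposal on its own. Your handling of axioms (2), (3) and (4) is sound: the closure sandwich $E\subseteq\overline E\subseteq\overline E^{\,d_i}$ together with $\mu_i(\overline E^{\,d_i})=\mu_i(E)$ does give (2), and the common-refinement (intersection of covers) argument is exactly what is needed to pass from $d_i$-total boundedness for each $i$ to total boundedness for a basic entourage determined by finitely many semi-metrics.

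The serious gap is in axiom (5). You assert that each $\mu_i$, ``being an MNC on $(X,d_i)$, satisfies'' $\mu_i(A\cup\{x\})=\mu_i(A)$. That identity is not among the axioms of Definition \ref{IvanAxiome}, and it does not follow from them: the functional $\Phi(A)=\diam(A)$ satisfies all of (1)--(5) on a complete (semi-)metric space (axiom (5) is Cantor's intersection theorem, axiom (3) holds because a set of diameter $0$ is a singleton), yet $\diam(A\cup\{x\})>\diam(A)$ in general. Remark \ref{MarininUslov} says that axiom (5) may be \emph{replaced} by the singleton identity when one is setting up the axiomatics; it does not say that every function satisfying (1)--(5) enjoys that identity. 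So your verification of (5) for $\mu^*$ rests on a property the hypotheses do not supply, and it is not clear the step can be repaired within your framework: even when $X$ is complete as a uniform space, the individual spaces $(X,d_i)$ need not be $d_i$-complete, so axiom (5) for $\mu_i$ is typically vacuous and gives nothing to take a supremum of. A secondary gap occurs in axiom (1): you only argue that $d_i$-unboundedness for some $i$ forces $\mu^*(E)=+\infty$, but the converse direction requires $\sup_{i\in I}\mu_i(E)<+\infty$ for bounded $E$, and a supremum of finite quantities over an infinite index set can be infinite (take $\mu_i=\diam_{d_i}$ on a countable product of lines and a set whose $d_i$-diameters grow with $i$). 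That direction is asserted, not proved, and needs either an extra hypothesis on the family $\{\mu_i\}$ or a more careful notion of boundedness.
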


Uniform spaces make a proper subclass of all topological spaces, but still wide enough. For instance all
topological vector spaces are uniform spaces. Hence, we can apply results of Arandjelovi\'c to topological vector spaces.

\section{Measures of "noncompactness" over standard Hilbert $W^*$-modul $l^2(B(H))$}

In this section we shall discuss standard Hilbert modules over a $W^*$-algebra $\mathcal A$, a narrower class
then that considered in section 2. As it is well known, $\mathcal A$ always has a unit.

In our earlier work \cite{KeckicLazovic}, we construct a locally convex topology $\tau$ on $l^2(\mathcal A)$
such that $T\in B^a(l^2(\mathcal A))$ is "compact" implies its image of the unit ball is totally bounded with
respect to $\tau$. This topology is defined via the family of semi-norms $p_{\f,y}$
\begin{equation}\label{polunorme}
p_{\f,y}(x)=\sqrt{\sum_{j=1}^{\infty}|\f(\eta_j^*\xi_j)|^2},\quad x=(\xi_1,\xi_2,\dots)\in l^2(\mathcal A),
\end{equation}
where $\f\in\mathcal A_*$ is a normal state and $y=(\eta_1,\eta_2,...)$ is a sequence of elements in
$\mathcal A$ such that
\begin{equation}\label{uslov za polunormu}
\sup_{j\geq1}\f(\eta_j^*\eta_j)=1.
\end{equation}

Also, in special case, where $\mathcal A=B(H)$ is the full algebra of all bounded operators on a Hilbert
space $H$, the converse is also proved, i.e.\ that any $T\in B^a(l^2(\mathcal A))$ whose image of the unit
ball is totally bounded with respect to $\tau$ must be "compact".

Construction described in the previous section endows the space $l^2(\mathcal A)$ by the corresponding Kuratowski,
Hausdorff and Istr\u{a}\c{t}escu measure of noncompactness, $\alpha,\chi,I:\Upsilon\to\NN$,
$$[\alpha(E)](p_{\f,y})=\inf\Big\{\varepsilon>0:E=\bigcup_{i=1}^{n} S_i,\;p_{\f,y}(x'-x'')<\varepsilon,\,\,\forall x',x''\in S_i\Big\},$$
$$[\chi(E)](p_{\f,y})=\inf\Big\{\varepsilon>0: E\subset\bigcup_{i=1}^{n} B_{p_{\f,y}}(x_i,\varepsilon), x_i\in l^2(\mathcal A)\Big\},$$
where $B_{p_{\f,y}}(x_i,\varepsilon)=\{y|p_{\f,y}(y-x_i)<\varepsilon\}$, and
$$[I(E)](p_{\f,y})=\sup\{\varepsilon>0: \hbox{there is}\;S\subset E\;\hbox{such that}\;p_{\f,y}(x'-x'')\geq \varepsilon,\;\forall x',x''\in S\}.$$

The function $\a$, $\chi$ and $I$ can be regarded as functions depending on two variables, on the bounded set
$\Omega$ and on the semi-norm $p_{\f,y}$. If we want to obtain a MNC that not depends on a particular
semi-norm, we can use the functions $\chi^*$, $\alpha^*$, $I^*:\Upsilon\rightarrow[0,+\infty)$ defined by
\begin{equation}\label{MNCdef}
\begin{gathered}
\chi^*(E)=\sup_{p_{\f,y}\in P} [\chi(E)](p_{\f,y}),\\
\alpha^*(E)=\sup_{p_{\f,y}\in P} [\alpha(E)](p_{\f,y}),\\
I^*(E)=\sup_{p_{\f,y}\in P} [I(E)](p_{\f,y})
\end{gathered}
\end{equation}
for each $E\in \Upsilon$, where $P$ is the set of all semi-norms of the form (\ref{polunorme}). Since
$\alpha$, $\chi$ and $I$ annihilates all singletons (Theorem \ref{osobine mere nekompaktnosti}-(a)), they
satisfy condition in Remark \ref{MarininUslov}, and hence they are measures of noncompactness in the sense of
Definition \ref{IvanAxiome}. By Theorem \ref{Arandjeloviceva mera} $\alpha^*$, $\chi^*$ and $I^*$ are
measures of noncompactness on $(l^2(\mathcal A),\tau)$ in the sense of Definition \ref{IvanAxiome}, as well.
Also, properties (c), (d), (e) and (f) in Theorem \ref{osobine mere nekompaktnosti} are easily transferred to
$\alpha^*$, $\chi^*$ and $I^*$, by taking a supremum. Finally, by (\ref{MNCinequalities1}), we have
\begin{equation}\label{chainMNC}
\chi^*(E)\leq I^*(E)\leq\alpha^*(E)\leq2\chi^*(E).
\end{equation}

\begin{remark} Note that $l^2(\mathcal A)$ is rarely complete, due to \cite[Proposition 3.3]{KeckicLazovic}. Therefore, the condition (5) in Definition \ref{IvanAxiome} is vague, unless $l^2(\mathcal A)'\cong l^2(\mathcal A)$ which is equivalent to the condition that $\mathcal A$ is finite dimensional.
\end{remark}

We want to place the MNC $\lambda$, discussed in section 2, somewhere in the preceding chain of inequalities.

\begin{proposition}\label{chi_leq_lambda}
For any bounded set $E\subseteq l^2(\mathcal A)$, we have
$$\chi^*(E)\leq \lambda(E).$$
\end{proposition}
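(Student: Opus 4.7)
The plan is to fix an arbitrary semi-norm $p_{\f,y}\in P$, prove the pointwise estimate $[\chi(E)](p_{\f,y})\le\lambda(E)$, and then take the supremum over $P$. Given $\varepsilon>0$, Proposition~\ref{Calc}-(\ref{Calc2}) supplies some $n$ with $\sup_{x\in E}\|x-P_nx\|<\lambda(E)+\varepsilon$, so $P_n(E)$ already serves as a $(\lambda(E)+\varepsilon)$-net for $E$ in the Hilbert-module norm. Two further ingredients then suffice: (i) comparing $p_{\f,y}$ with the norm, and (ii) showing $P_n(E)$ is totally bounded with respect to $p_{\f,y}$.

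For (i), the Cauchy--Schwarz inequality for the state $\f$ gives $|\f(\eta_j^*\xi_j)|^2\le\f(\eta_j^*\eta_j)\f(\xi_j^*\xi_j)$, and condition (\ref{uslov za polunormu}) bounds the first factor by $1$. Summing over $j$ yields $p_{\f,y}(x)^2\le\f(\skp{x}{x})\le\|x\|^2$. For (ii), consider the map $\Phi:P_nl^2(\mA)\to\C^n$ defined by $\Phi(u_1,\dots,u_n,0,\dots)=(\f(\eta_1^*u_1),\dots,\f(\eta_n^*u_n))$; directly from (\ref{polunorme}), $p_{\f,y}(u)$ equals the Euclidean norm of $\Phi(u)$ for $u\in P_nl^2(\mA)$. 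The same Cauchy--Schwarz estimate shows that each coordinate of $\Phi$ is bounded on the norm-bounded set $P_n(E)$ by $\|u\|\le K:=\sup_{x\in E}\|x\|$, so $\Phi(P_n(E))$ is a bounded subset of $\C^n$, hence totally bounded; pulling back, $P_n(E)$ admits a finite $\delta$-net $x_1,\dots,x_m$ with respect to $p_{\f,y}$ for every $\delta>0$.

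Putting the pieces together, for any $x\in E$ pick $i$ with $p_{\f,y}(P_nx-x_i)<\delta$; then by (i) and the triangle inequality
\[
p_{\f,y}(x-x_i)\le p_{\f,y}(x-P_nx)+p_{\f,y}(P_nx-x_i)\le\|x-P_nx\|+\delta<\lambda(E)+\varepsilon+\delta.
\]
Hence $[\chi(E)](p_{\f,y})\le\lambda(E)+\varepsilon+\delta$, and letting $\varepsilon,\delta\to0$ followed by taking the supremum over $p_{\f,y}\in P$ yields $\chi^*(E)\le\lambda(E)$.

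The only delicate point, and the step I would write out carefully, is (ii): the $\eta_j$ need not be norm-bounded, so naive estimates on $\Phi$ fail, and one must use the state-Cauchy--Schwarz estimate together with (\ref{uslov za polunormu}) to absorb the $\eta_j$ into the factor $\f(\eta_j^*\eta_j)\le1$. Everything else is a routine combination of the definition of $\lambda$ from Proposition~\ref{Calc} with the triangle inequality.
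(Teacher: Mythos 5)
Your argument is correct and takes essentially the same route as the paper: approximate $E$ by $P_nE$, use that $P_nE$ is totally bounded in $\tau$ together with the comparison $p_{\f,y}(x)\le\|x\|$, and conclude via Proposition \ref{Calc}-(\ref{Calc2}). The only difference is one of packaging --- you prove the $\tau$-total boundedness of $P_nE$ directly through the finite-dimensional map $\Phi$ and argue pointwise in $p_{\f,y}$, whereas the paper cites this fact from \cite[Proposition 3.4]{KeckicLazovic} and runs the estimate through the subadditivity of $\chi^*$.
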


\begin{proof} Let $E$ be a bounded set, and let $P_n$ denote the projection to the first $n$ coordinates
in $l^2(\mathcal A)$, i.e.\ $P_n(\xi_1,\xi_2,\dots)=(\xi_1,\xi_2,\dots,\xi_n,0,\dots)$. Since $E\subseteq
P_nE+(I-P_n)E$, and since $\chi^*$ is subadditive, we have
$$\chi^*(E)\leq\chi^*(P_nE)+\chi^*((I-P_n)E).$$
However, by (\cite[Proposition 3.4.]{KeckicLazovic}), the set $P_nE$ is totally bounded, and we have
$\chi^*(P_nE)=0$. Hence
$$\chi^*(E)\leq\chi^*((I-P_n)E)\leq\sup_{x\in E}||(I-P_n)x||,$$
for all $n\in\mathbb{N}$. Therefore, by Proposition \ref{Calc}-(\ref{Calc2}), $\chi^*(E)\leq\lambda(E).$
\end{proof}

The preceding Proposition establishes a lower bound of $\lambda$. Before we obtained an upper bound for $\lambda$, in a special case, we introduce balanced sets.

\begin{definition} Let $E\subset l^2(\mathcal A)$ be a bounded set.

\begin{itemize}
\item[(a)] We say that $E$ is \emph{$\mathcal A$-balanced} if $x\cdot u\in E$ whenever $x\in E$ and $u\in\mathcal A$ is unitary. (This definition is motivated by the notion of balanced sets on topological vector spaces over the field $\mathbf C$, where $u$ unitary is reduced to $|u|=1$.)

\item[(b)] By \emph{$\mathcal A$-balanced hull} of $E$ we assume the minimal balanced set containing $E$, that is $\bigcup Eu$, where the union is taken over all unitaries $u\in\mathcal A$.

\end{itemize}
\end{definition}

In Proposition \ref{AlgPro}-(\ref{AlgPro5}) we proved $\lambda (Eu)=\lambda(E)$. We give two extensions of this statement, the first of them concerning the balanced hull.

\begin{proposition} Let $E\subseteq l^2(\mathcal A)$ be a bounded set and let $F$ be its $\mathcal A$-balanced hull. Then $\lambda(F)=\lambda(E)$.
\end{proposition}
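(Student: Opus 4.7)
The plan is to reduce the claim to the sequence characterization of $\lambda$ given in Proposition~\ref{Calc}-(\ref{Calc2}), and then exploit two facts: right multiplication by a unitary $u\in\mathcal A$ is an isometry on $l^2(\mathcal A)$, and the truncation projections $P_n$ commute with the right $\mathcal A$-action.

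The inclusion $E\subseteq F$ (take $u=1$) together with monotonicity, Proposition~\ref{AlgPro}-(\ref{AlgPro1}), gives $\lambda(E)\le\lambda(F)$ for free. For the reverse inequality, pick $z\in F$; by definition $z=yu$ for some $y\in E$ and some unitary $u\in\mathcal A$. Since $P_n$ is $\mathcal A$-linear we have $P_n z=(P_n y)u$, so
$$\|z-P_nz\|=\|(y-P_ny)u\|=\|y-P_ny\|,$$
the last equality coming from $\langle xu,xu\rangle=u^*\langle x,x\rangle u$ and the $C^*$-identity. Taking the supremum over $z\in F$, which amounts to taking the supremum over all $y\in E$ and all unitaries $u$, yields
$$\sup_{z\in F}\|z-P_nz\|=\sup_{y\in E}\|y-P_ny\|$$
for every $n$. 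Passing to the infimum over $n$ and invoking Proposition~\ref{Calc}-(\ref{Calc2}) produces $\lambda(F)=\lambda(E)$.

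There is no real obstacle here; the only thing one must check is that the balanced hull has not been enlarged to include nontrivial limits or convex combinations (it has not, by the definition), so the pointwise argument above handles all elements of $F$. If one were tempted to argue via Proposition~\ref{AlgPro}-(\ref{AlgPro3}) and Proposition~\ref{AlgPro}-(\ref{AlgPro5}), the difficulty would be that $F=\bigcup_u Eu$ is in general an uncountable union of sets $Eu$, and subadditivity over unions is only formulated for finite unions; routing through the quantity $\sup_{x}\|x-P_nx\|$ bypasses this entirely.
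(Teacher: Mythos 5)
Your proof is correct and follows essentially the same route as the paper: both reduce to the formula $\lambda(\cdot)=\lim_n\sup\|x-P_nx\|$ from Proposition~\ref{Calc}-(\ref{Calc2}), use that $P_n$ commutes with the right action of a unitary $u$ so that $\|xu-P_n(xu)\|\le\|x-P_nx\|$ (you in fact note equality, which is a harmless strengthening), and get the reverse inequality from $E\subseteq F$ and monotonicity. No issues.
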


\begin{proof} For all $x\in E$ and all unitaries $u$, we have $||u||=1$, and hence $||xu-P_nxu||\le||x-P_nxu||$. Therefore, by Proposition \ref{Calc}-(\ref{Calc2}), we have
$$\lambda(F)=\lim_{n\to+\infty}\sup_{x\in E,u-\mbox{\scriptsize unitary}}||xu-P_nxu||\le\lim_{n\to+\infty}\sup_{x\in E}||x-P_nx||=\lambda(E).$$
The opposite inequality $\lambda(E)\le\lambda(F)$ follows from $E\subseteq F$.
\end{proof}

\begin{proposition} Let $E\subseteq l^2(\mathcal A)$ be a bounded set, let $u\in\mathcal A$ be a unitary and let $\mu$ stands for any of Kuratowski, Hausdorff or Istr\u{a}\c{t}escu MNC. Then
$$\mu^*(Eu)=\mu^*(E).$$
\end{proposition}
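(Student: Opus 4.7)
The plan is to reduce the claim to a bijective correspondence within the family $P$ of semi-norms defining $\tau$. Since $\mu^*(\cdot)=\sup_{p\in P}[\mu(\cdot)](p)$, it suffices to exhibit, for each $p_{\varphi,y}\in P$, another $p_{\tilde\varphi,\tilde y}\in P$ with $p_{\varphi,y}(xu)=p_{\tilde\varphi,\tilde y}(x)$ for every $x\in l^2(\mathcal A)$. Once this is in hand, the bijection $x\leftrightarrow xu$ of $l^2(\mathcal A)$ carries every partitioning, every covering, and every $\varepsilon$-separated subset of $Eu$ with respect to $p_{\varphi,y}$ to a corresponding one for $E$ with respect to $p_{\tilde\varphi,\tilde y}$ with the same diameters/radii/separation constants, so that $[\mu(Eu)](p_{\varphi,y})=[\mu(E)](p_{\tilde\varphi,\tilde y})$ for $\mu\in\{\alpha,\chi,I\}$. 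Since the map $(\varphi,y)\mapsto(\tilde\varphi,\tilde y)$ will admit an inverse by the analogous construction with $u^*$ in place of $u$, it is a bijection of $P$ onto itself, and taking suprema yields the required equality $\mu^*(Eu)=\mu^*(E)$.

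For the construction, given $p_{\varphi,y}$ with $y=(\eta_1,\eta_2,\dots)$, I propose
$$\tilde\varphi(a):=\varphi(u^*au),\qquad \tilde y:=(\eta_1 u^*,\eta_2 u^*,\dots).$$
The design is dictated by the identity
$$\tilde\varphi\bigl((\eta_j u^*)^*\xi_j\bigr)=\varphi(u^*u\,\eta_j^*\xi_j\,u)=\varphi(\eta_j^*\xi_j u),$$
which, squared in absolute value and summed over $j$, gives exactly $p_{\tilde\varphi,\tilde y}(x)=p_{\varphi,y}(xu)$, as needed.

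Two admissibility checks remain. First, $\tilde\varphi$ is a normal state: $a\mapsto u^*au$ is unital, positive and weak-$*$ continuous, and its composition with the normal state $\varphi$ is again a normal state. Second, the normalization (\ref{uslov za polunormu}) must be preserved, and here unitarity is essential:
$$\tilde\varphi\bigl((\eta_j u^*)^*(\eta_j u^*)\bigr)=\varphi(u^*u\,\eta_j^*\eta_j\,u^*u)=\varphi(\eta_j^*\eta_j),$$
so $\sup_j\tilde\varphi(\tilde\eta_j^*\tilde\eta_j)=1$. Both verifications are routine; the only subtle point in the plan is choosing $\tilde\varphi$ and $\tilde y$ so that the semi-norm identity, the normal-state property of $\tilde\varphi$, and the admissibility condition (\ref{uslov za polunormu}) for $\tilde y$ hold simultaneously. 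This is precisely what forces $u$ to satisfy $u^*u=uu^*=1$ and explains why the argument would not survive the weakening of the hypothesis beyond unitaries.
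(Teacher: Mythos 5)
Your proposal is correct and follows essentially the same route as the paper: both arguments pass from $p_{\varphi,y}$ to the semi-norm built from $\varphi^u(a)=\varphi(u^*au)$ and $yu^*=(\eta_1u^*,\eta_2u^*,\dots)$, verify the identity $p_{\varphi,y}(xu)=p_{\varphi^u,yu^*}(x)$ together with normality and the normalization condition, and then take suprema, obtaining the reverse inequality from $E=(Eu)u^{-1}$. The only cosmetic difference is that the paper checks normality of $\varphi^u$ by a monotone-net argument while you invoke weak-$*$ continuity of $a\mapsto u^*au$; both are valid.
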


\begin{proof} First, observe that given a normal state $\f$ on $\mathcal A$ and a unitary $u\in\mathcal A$, the mapping $\f^u$, $\f^u(x)=\f(u^*xu)$ is also a normal state. Obviously, $\f^u(1)=1$ and $\f^u(x)\ge0$ whenever $x\ge0$. Hence, it suffices to prove that $\f^u$ is normal. Let $x_\a$ be an increasing net with the least upper bound $x$. Then $ux_\a u^*$ is also an increasing net, bounded by $uxu^*$. Thus, its least upper bound is less then $uxu^*$. Moreover, it is equal to $uxu^*$ by interchanging roles. Therefore
$$\sup\f^u(x_\a)=\sup\f(ux_\a u^*)=\f(uxu^*)=\f^u(x).$$

Let $\e>0$ be arbitrary. By (\ref{MNCdef}) there is a semi-norm $p_{\f,y}\in P$ such that $[\mu(Eu)](p_{\f,y})>\mu^*(Eu)-\e$. Next, we have $p_{\f,y}(xu)=p_{\f^u,yu}$, where $yu^*=(\eta_1u^*,\eta_2u^*,\dots)$. Indeed
$$p_{\f,y}(xu)^2=\sum_{j=1}^{+\infty}|\f(\eta_j^*\xi_ju)|^2=\sum_{j=1}^{+\infty}|\f(u^*(\eta_ju^*)^*\xi_ju)|^2=
p_{\f^u,yu^*}(x)^2.$$
(Note that the pair $(\f^u,yu^*)$ trivially satisfies (\ref{uslov za polunormu}).) Therefore, $[\mu(E)](p_{\f^u,yu^*})=[\mu(Eu)](p_{\f,y})>\mu^*(Eu)-\e$ and hence $\mu^*(E)\ge\mu^*(Eu)$. The opposite inequality follows by $E=(Eu)u^{-1}$.
\end{proof}

\begin{remark}\label{RemarkBalanced}
We don't know whether Kuratowski, Hausdorff and Istr\u{a}\c{t}escu MNCs are stable with respect to balanced hull.
\end{remark}

Now, we are going to derive an upper bound for the MNC $\lambda$. Namely, in a special case, where $\mathcal A=B(H)$, we can obtain the exact position of $\lambda$ in the inequality chain (\ref{chainMNC}) for balanced sets.

\begin{theorem}\label{lambda_leq_I}
Let $\mathcal A=B(H)$. Let $E\subseteq l^2(\mathcal A)$ be an $\mathcal A$-balanced set. Then
$$\lambda(E)\leq\sqrt{||E||I^*(E)},$$
where $||E||=\sup_{x\in E}||x||$.
\end{theorem}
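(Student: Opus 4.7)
Let $\lambda = \lambda(E)$ and $R = \|E\|$. The case $\lambda = 0$ is trivial, so assume $\lambda > 0$. The plan is to produce, for each $\varepsilon > 0$, a semi-norm $p_{\f,y}$ of the form (\ref{polunorme}) and an infinite sequence $\{\tilde x_k\} \subseteq E$ with $p_{\f,y}(\tilde x_k - \tilde x_l) \geq \lambda - \varepsilon$ for all $k \neq l$. This already gives $I^*(E) \geq \lambda$, from which the stated inequality follows via $\lambda^2 \leq \lambda R \leq R \cdot I^*(E)$.

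\emph{Construction of the sequence.} Using Proposition \ref{Calc}-(\ref{Calc2}), I pick inductively $x_k \in E$ and integers $0 = n_0 < n_1 < n_2 < \cdots$ so that the ``mass'' of $x_k$ is essentially concentrated on the coordinate band $\mathrm{band}_k := \{n_{k-1}+1, \ldots, n_k\}$: namely $\|(I - P_{n_{k-1}})x_k\| > \lambda - \varepsilon/2^k$ and $\|(I - P_{n_k})x_k\| < \varepsilon/2^k$. Pythagoras then makes the band operator $A_k := \sum_{j \in \mathrm{band}_k}(x_k)_j^*(x_k)_j \in B(H)_+$ satisfy $\|A_k\| = \sigma_k^2$ with $\sigma_k^2 > \lambda^2 - O(\varepsilon/2^k)$. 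The $\mathcal A$-balance hypothesis lets me rotate each $x_k$ on the right by an arbitrary unitary of $B(H)$ without leaving $E$. Fix a unit vector $h \in H$ once and for all; for each $k$ pick a unit vector $h_k$ with $\langle A_k h_k, h_k\rangle > \sigma_k^2 - \varepsilon/2^k$ and a unitary $u_k \in B(H)$ with $u_k h = h_k$. Setting $\tilde x_k := x_k u_k \in E$, the identity $\sum_{j \in \mathrm{band}_k}\|(\tilde x_k)_j h\|^2 = \langle A_k h_k, h_k\rangle$ aligns every band with the single vector $h$; right multiplication commutes with each $P_n$, so the tail bound $\|(I - P_{n_k})\tilde x_k\| < \varepsilon/2^k$ is preserved.

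\emph{Witness semi-norm and estimate.} Take $\f = \f_h$, the vector state $T \mapsto \langle Th, h\rangle$. For each index $j$, let $k(j)$ be the unique $k$ with $j \in \mathrm{band}_{k(j)}$, and set $\eta_j := |v_j\rangle\langle h|$, the rank-one operator sending $h$ to $v_j := (\tilde x_{k(j)})_j h / \|(\tilde x_{k(j)})_j h\|$ (zero if the denominator vanishes). Then $\f_h(\eta_j^*\eta_j) = \|v_j\|^2 \in \{0,1\}$ has supremum $1$, fulfilling (\ref{uslov za polunormu}). For $k > l$, I bound $p_{\f,y}(\tilde x_k - \tilde x_l)^2$ below by restricting the defining sum to $\mathrm{band}_k$. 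Writing $a_j := \|(\tilde x_k)_j h\|$ and $c_j := \f_h(\eta_j^*(\tilde x_l)_j)$, the choice of $v_j$ gives $\f_h(\eta_j^*(\tilde x_k)_j) = a_j$ and $|c_j| \leq \|(\tilde x_l)_j h\|$. The elementary inequality $|a_j - c_j|^2 \geq a_j^2 - 2a_j|c_j|$, summed and combined with Cauchy--Schwarz over $\mathrm{band}_k$, yields $p_{\f,y}(\tilde x_k - \tilde x_l)^2 \geq \sum a_j^2 - 2\bigl(\sum a_j^2\bigr)^{1/2}\bigl(\sum|c_j|^2\bigr)^{1/2}$.

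\emph{Main obstacle.} The crux is the cross-term $\sum|c_j|^2$ on $\mathrm{band}_k$: since $\tilde x_l$ need not be supported in a band disjoint from $\mathrm{band}_k$, its coordinates there could a priori be as large as $R$. The essential use of $k > l$ is that $\mathrm{band}_k \subseteq \{j > n_l\}$, so the inductively imposed tail bound gives $\sum_{j \in \mathrm{band}_k}\|(\tilde x_l)_j h\|^2 \leq \|(I - P_{n_l})\tilde x_l\|^2 < \varepsilon^2/4^l$. Combined with $\sum a_j^2 > \sigma_k^2 - \varepsilon/2^k$ and $\sigma_k \leq R$, this bounds the right-hand side from below by $\lambda^2 - O(\varepsilon/2^l)$. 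Passing to a tail of the sequence (equivalently, re-indexing from a large $N$) renders all error terms uniformly small, so $p_{\f,y}(\tilde x_k - \tilde x_l) \geq \lambda - \varepsilon'$ for all $k \neq l$. Sending $\varepsilon' \to 0$ gives $I^*(E) \geq \lambda$, and thus $\lambda(E)^2 \leq R \cdot I^*(E)$.
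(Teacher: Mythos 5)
Your argument is correct, and it follows the same overall skeleton as the paper's proof: both use Proposition \ref{Calc}-(\ref{Calc2}) to identify $\lambda(E)$ with $\inf_k\|(I-P_k)E\|$, both inductively extract a sequence $x_k\in E$ whose ``new mass'' sits in pairwise disjoint coordinate bands with small tails, both exploit $\mathcal A$-balancedness to rotate each $x_k$ on the right, and both then exhibit one admissible semi-norm in which the rotated sequence is uniformly separated. The genuine difference is in the witness semi-norm, and it matters quantitatively. The paper applies \cite[Lemma 4.6]{KeckicLazovic} to the positive elements $\langle z_n,z_n\rangle$ to produce a normal state $\varphi$ and elements $\upsilon_n,\nu_n$ with $\varphi(\upsilon_n^*\langle z_n,z_n\rangle\nu_n)$ large; the resulting semi-norm $p$ must then be renormalized by $\|E\|$ to land in the admissible family $P$, which is exactly where the separation degrades to $(\delta^2-\varepsilon)/\|E\|$ and hence only $I^*(E)\ge\lambda(E)^2/\|E\|$ is obtained. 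You instead use the Hilbert-space structure of $B(H)$ directly: a maximizing unit vector $h_k$ for the band operator $A_k$, a unitary carrying a fixed $h$ to $h_k$, the vector state $\varphi_h$, and rank-one $\eta_j=|v_j\rangle\langle h|$, so that $\varphi_h(\eta_j^*\xi_j)=\|\xi_jh\|$ with $\varphi_h(\eta_j^*\eta_j)=1$ and no normalization loss occurs. This yields the stronger intermediate bound $I^*(E)\ge\lambda(E)$, from which the stated inequality follows via $\lambda(E)\le\|E\|$; your route would in fact sharpen the paper's Corollary for balanced sets in $l^2(B(H))$ to $\chi^*(E)\le\lambda(E)\le I^*(E)$. (Two cosmetic points: the error term $O(\varepsilon/2^l)$ is already uniformly $O(\varepsilon)$ over all pairs, so passing to a tail is unnecessary; and one should note, as you implicitly do, that the positive lower bound forces the $\tilde x_k$ to be distinct.)
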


\begin{proof}Let $P_k$ denote the projection to the first $k$ coordinates, i.e.\
$P_k(\xi_1,\xi_2,\dots)\allowbreak=(\xi_1,\dots,\xi_k,0,0,\dots)$. It is well known that all $P_k$ are
"compact". If $\inf_{k\ge1}||(I-P_k)E||=0$, then from Proposition
\ref{chi_leq_lambda} and (\ref{chainMNC}), it follows $\lambda^*(E)=\chi^*(E)=I^*(E)=0$. So, let
$$\delta=\inf_{k\ge1}||(I-P_k)E||>0.$$
Then immediately, $\delta\le||E||$. Choose $\varepsilon>0$ such that $\varepsilon<\delta^2$. Define the
sequence of projections $Q_n\in\{P_1,P_2,\dots\}$ and the sequences of vectors $x_n$ and $z_n\in l^2(\mathcal
A)$ in the following way. Let $Q_0=0$. If $Q_{n-1}$ is already defined, there is $x_n\in E$ such that
$||x_n||\geq||(I-Q_{n-1})x_n||>C_1$, where
$C_1=\frac{1}{2}\left(\delta+\sqrt{\delta^2-\varepsilon/2}\right)$. Since
$\lim_{k\to+\infty}||(I-P_k)(I-Q_{n-1})x_n||=0$, there is a positive integer $k_n$ such that
$||(I-P_{k_n})(I-Q_{n-1})x_n||<C_2$, where
$C_2=\min\left\{\frac{\varepsilon}{2||E||},\frac{1}{2}\left(\delta-\sqrt{\delta^2-\varepsilon/2}\right)\right\}$. Define
$Q_n=P_{k_n}$ and
\begin{equation}\label{define_z}
z_n=Q_n(I-Q_{n-1})x_n.
\end{equation}

The sequences $x_n$ and $z_n$ have the following properties:

Firstly, by definition, there hold the inequalities
\begin{equation}\label{property1}
||(I-Q_n)(I-Q_{n-1})x_n||<C_2,
\end{equation}
\begin{equation}\label{property1a}
||z_n||\le||x_n||\le||E||,
\end{equation}
\begin{equation}\label{property1b}
||z_n||\ge||(I-Q_{n-1})x_n||-||(I-Q_n)(I-Q_{n-1})x_n||>C_1-C_2.
\end{equation}

Secondly,
\begin{equation}\label{property2}
\left<z_n,x_n\right>=\left<z_n,z_n\right>.
\end{equation}
Indeed, since $z_n=Q_n(I-Q_{n-1})x_n$, we have
\begin{align*}
\left<z_n,x_n\right>&=\left<Q_n(I-Q_{n-1})x_n,x_n\right>=\\
                    &=\left<Q_n(I-Q_{n-1})x_n,(I-Q_{n-1})Q_nx_n\right>=\left<z_n,z_n\right>.
\end{align*}

Thirdly, for $m>n$ we have
\begin{equation}\label{property3}
||\left<z_m,x_n\right>||<C_2||E||.
\end{equation}
Indeed, for such $m$ and $n$ we have $Q_{n-1}\le Q_n\le Q_{m-1}$, i.e.\ $I-Q_{m-1}\le I-Q_n\le I-Q_{n-1}$,
implying $I-Q_{m-1}=(I-Q_{m-1})(I-Q_n)(I-Q_{n-1})$, and thus
\begin{align*}
\left<z_m,x_n\right>&=\left<(I-Q_{m-1})z_m,x_n\right>=\\
    &=\left<z_m,(I-Q_{m-1})(I-Q_n)(I-Q_{n-1})x_n\right>=\\
    &=\left<z_m,(I-Q_n)(I-Q_{n-1})x_n\right>.
\end{align*}
Therefore, by (\ref{property1}) and (\ref{property1b})
$$||\left<z_m,x_n\right>||\le||z_m||\cdot||(I-Q_n)(I-Q_{n-1})x_n||\le C_2||E||.$$

Let us construct a semi-norm $p$, continuous in $\tau$, and a totally discrete sequence from $E$. Since by
(\ref{property1b}) $||z_n||^2=||\left<z_n,z_n\right>||>(C_1-C_2)^2$, we can choose a normal state $\f$ and
$\upsilon_j$, $\nu_j\in\mathcal A$ according to \cite[Lemma 4.6.]{KeckicLazovic}), such that
\begin{equation}\label{phi_choosed}
\f(\upsilon_n^*\left<z_n,z_n\right>\nu_n)>(C_1-C_2)^2.
\end{equation}
Consider the semi-norm $p$ given by
$$p(x)=\sqrt{\sum_{j=1}^{+\infty}|\f(\left<z_j\upsilon_j,x\right>)|^2}.$$
By (\ref{define_z}) there is a sequence $\zeta_j\in\mathcal A$ such that
$$z_n=(0,\dots,0,\zeta_{k_{n-1}+1},\dots,\zeta_{k_n},0,\dots).$$
Define $\omega_j=\zeta_j\upsilon_n/\f(\upsilon_n^*\zeta^*_j\zeta_j\upsilon_n)^{1/2}$, for $k_{n-1}+1\leq
j\leq k_n$. Obviously $\f(\omega_j^*\omega_j)=1$. Also, for $x=(\xi_1,\xi_2,\dots)$ we have
\begin{align*}
\left|\f(\left<z_n\upsilon_n,x\right>)\right|^2&=\Big|\sum_{j=k_{n-1}+1}^{k_n}\f(\upsilon_n^*\zeta^*_j\zeta_j\upsilon_n)^{1/2}\f(\omega_j^*\xi_j)\Big|^2\le\\
    &\le\sum_{j=k_{n-1}+1}^{k_n}\f(\upsilon_n^*\zeta^*_j\zeta_j\upsilon_n)\sum_{j=k_{n-1}+1}^{k_n}\left|\f(\omega_j^*\xi_j)\right|^2=\\
    &=\f(\upsilon^*_n\left<z_n,z_n\right>\upsilon_n)\sum_{j=k_{n-1}+1}^{k_n}\left|\f(\omega_j^*\xi_j)\right|^2.
\end{align*}
Including (\ref{property1a}) we obtain
$\f(\upsilon^*_n\left<z_n,z_n\right>\upsilon_n)\le||\upsilon^*_n\left<z_n,z_n\right>\upsilon_n||=||z_n||^2\le||E||^2$
and hence
$$p(x)^2=\sum_{n=1}^{+\infty}|\varphi(\left<z_n\upsilon_n,x\right>)|^2\le||E||^2\sum_{j=1}^{+\infty}|\f(\omega_j^*\xi_j)|^2=||E||^2
p_{\f,\omega_1,\dots,\omega_n,\dots}(x)^2.$$
Thus, we conclude that $p$ is well defined and also that it is continuous with respect to $\tau$.

Also, $E$ is $\mathcal A$-balanced, so $x_n\nu_n\in E$. Finally we shall prove that $x_n\nu_n$ is a totally
discrete sequence. Indeed, for $m>n$ we have
\begin{align*}
p(x_m\nu_m-x_n\nu_n)&\ge\left|\f(\left<z_m\upsilon_m,x_m\nu_m-x_n\nu_n\right>)\right|\ge\\
    &\ge\left|\f(\upsilon_m^*\left<z_m,x_m\right>\nu_m)\right|-\left|\f(\upsilon_m^*\left<z_m,x_n\right>\nu_n)\right|.
\end{align*}
However, by (\ref{property2}) and (\ref{phi_choosed}),
$$\left|\f(\upsilon_m^*\left<z_m,z_m\right>\nu_m)\right|>(C_1-C_2)^2$$
and, by (\ref{property3})
$$|\f(\upsilon_m^*\left<z_m,x_n\right>\nu_n)|\le||\left<z_m,x_n\right>||<C_2||E||.$$
Therefore
$$p(x_m\nu_m-x_n\nu_n)>(C_1-C_2)^2-C_2||E||\geq\delta^2-\varepsilon$$
and
$$p_{\f,\omega_1,\dots,\omega_n,\dots}(x_m\nu_m-x_n\nu_n)>\frac{\delta^2-\varepsilon}{||E||}.$$
For $\varepsilon\in(0,\delta^2)$, we have $I^*(E)\geq\frac{\delta^2-\varepsilon}{||E||}$ and hence
$I^*(E)\geq\frac{\delta^2}{||E||}=\frac{\lambda(E)^2}{||E||}$. Thus, $\lambda(E)\leq\sqrt{||E||I^*(E)}$.
\end{proof}

\begin{remark} The preceding proof is adapted proof of our earlier result \cite[Theorem 4.10]{KeckicLazovic}.
\end{remark}

\begin{corollary} On the standard Hilbert module $l^2(B(H))$ over the algebra $B(H)$ of all bounded operators on a Hilbert space $H$, there holds
$$\chi^*(E)\leq\lambda(E)\leq\sqrt{||E||I^*(E)}\leq\sqrt{||E||\alpha^*(E)}\leq\sqrt{2||E||\chi^*(E)},$$
for any balanced set, i.e.\ a set for which $x\in E$, $u$-unitary implies $xu\in E$.
\end{corollary}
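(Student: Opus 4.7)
The Corollary is essentially a concatenation of results already established in the paper, so the plan is to assemble them in order rather than to prove anything new. First I would note that the hypothesis $\mathcal A = B(H)$ puts us in the setting of Theorem \ref{lambda_leq_I}, and the hypothesis that $E$ is balanced (i.e.\ $\mathcal A$-balanced) is exactly what that theorem requires. Boundedness of $E$ is implicit in the fact that $\|E\|<\infty$ appears in the bounds.

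The four inequalities are handled as follows. For the leftmost inequality $\chi^*(E)\le\lambda(E)$, I would cite Proposition \ref{chi_leq_lambda} directly; note that this does not even use $\mathcal A=B(H)$ or balancedness. For the second inequality $\lambda(E)\le\sqrt{\|E\|\,I^*(E)}$, I would invoke Theorem \ref{lambda_leq_I}, which is exactly this statement and is where both hypotheses are used. The two remaining inequalities follow by applying the order-preserving map $t\mapsto\sqrt{\|E\|\,t}$ to the inequalities $I^*(E)\le\alpha^*(E)$ and $\alpha^*(E)\le 2\chi^*(E)$, respectively, both of which are recorded in the chain (\ref{chainMNC}).

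No step presents any genuine obstacle — the only minor thing to mention is that the map $t\mapsto\sqrt{\|E\|\,t}$ is monotone on $[0,\infty)$, which justifies the third and fourth inequalities from the corresponding inequalities between $I^*$, $\alpha^*$ and $\chi^*$. I would write the proof as essentially a one-line chain: apply Proposition \ref{chi_leq_lambda}, then Theorem \ref{lambda_leq_I}, then monotonicity of square root together with (\ref{chainMNC}).
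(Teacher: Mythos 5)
Your proposal is correct and is exactly the intended argument: the paper states the corollary without a separate proof precisely because it is the concatenation of Proposition \ref{chi_leq_lambda}, Theorem \ref{lambda_leq_I}, and the chain (\ref{chainMNC}) pushed through the monotone map $t\mapsto\sqrt{\|E\|\,t}$. Nothing is missing.
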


Finally, we discuss some relationship between the MNC of an arbitrary adjointable operator $\lambda_0$
introduced in Definition \ref{DefinicijaLambda0}, and corresponding MNCs derived from $\alpha^*$, $\chi^*$ and $I^*$.

\begin{definition}
Let $\mathcal A$ is an arbitrary $W^*$-algebra and let $T\in B^a(l^2(\mathcal A))$ be an adjointable
operator. The functions $\alpha_0^*$, $\chi_0^*$, $I_0^*:B^a(l^2(\mathcal A))\rightarrow[0,+\infty)$ defined
by
$$\alpha_0^*(T)=\alpha^*(T(B_1)),\quad\chi_0^*(T)=\chi^*(T(B_1)),\quad I_0^*(T)=I^*(T(B_1))$$
are called, respectively, Kuratowski, Hausdorff and Istr\u{a}\c{t}escu measure of noncompactness of the
operator $T$.
\end{definition}

\begin{proposition}\label{osobine_T}
Let $\mathcal A$ be an arbitrary $W^*$-algebra, let $T$, $S\in B^a(l^2(\mathcal A))$ and let $\mu$ stands for any of MNCs $\alpha$, $\chi$, $I$. Then

\begin{itemize}

\item[(a)] All $\alpha^*_0$, $\chi^*_0$ and $I^*_0$ are subadditive and positivel homogeneous, i.e.\ there holds
$$\mu^*_0(T+S)\le\mu^*_0(T)+\mu^*_0(S),\quad\mu^*_0(cT)=c\mu^*(T),\quad\mbox{for all }c>0.$$

\item[(b)] The functions $\alpha_0^*$, $\chi_0^*$ and $I_0^*$ are equivalent to each other,
that is,
$$\chi_0^*(T)\leq I_0^*(T)\leq\alpha_0^*(T)\leq2\chi_0^*(T).$$
Also, there holds $\chi_0^*(T)\le\lambda_0(T)$.

\item[(c)] $\chi^*_0(T)$, $\lambda_0(T)\le||T||$ and $\alpha^*_0(T)$, $I^*_0(T)\le2||T||$.

\item[(d)] If $T$ is "compact", i.e.\ $T$ belongs to the closed linear space generated by $x\mapsto z\skp yx$,
then $\lambda_0(T)=\chi_0(T)=\alpha_0(T)=I(T)=0$. In general, the converse might not hold.

\item[(e)] $\mu_0^*(T+K)=\mu_0^*(T)$, as well as
$\lambda_0(T+K)=\lambda_0(T)$ for all ''compact'' operators $K$.
\end{itemize}
\end{proposition}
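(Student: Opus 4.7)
The plan is to reduce each of (a)--(e) to a property of the image $T(B_1)$ that we have already established for the set-level MNCs in Sections 2 and 3. Three elementary containments do most of the work: $(T+S)(B_1)\subseteq T(B_1)+S(B_1)$, $(cT)(B_1)=c\,T(B_1)$ for $c>0$, and $T(B_1)\subseteq ||T||\,B_1$, together with the symmetry $B_1=-B_1$.

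For (a), I would combine monotonicity with the algebraic semi-additivity of $\mu^*$ (from Theorem \ref{osobine mere nekompaktnosti}(d), transferred to $\mu^*$ by taking the supremum over $p_{\f,y}$) and Proposition \ref{AlgPro}(\ref{AlgPro4}) for $\lambda$; positive homogeneity follows from $(cT)(B_1)=cT(B_1)$ and the semi-norm homogeneity $p_{\f,y}(cx)=c\,p_{\f,y}(x)$. For (b), the chain $\chi_0^*\leq I_0^*\leq\alpha_0^*\leq 2\chi_0^*$ is (\ref{chainMNC}) applied to the single set $E=T(B_1)$, and $\chi_0^*(T)\leq\lambda_0(T)$ is Proposition \ref{chi_leq_lambda} applied at the same $E$. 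For (c), $\lambda_0(T)\leq ||T||$ is Proposition \ref{ConPro}(\ref{CnPro4}) applied to $T(B_1)$, and to get $\chi_0^*(T)\leq ||T||$ my first step is to show that every generating semi-norm is dominated by the norm: Cauchy--Schwarz for the state $\f$ combined with (\ref{uslov za polunormu}) gives
\[
p_{\f,y}(x)^2=\sum_j|\f(\eta_j^*\xi_j)|^2\leq\sum_j\f(\eta_j^*\eta_j)\f(\xi_j^*\xi_j)\leq\f(\skp{x}{x})\leq||x||^2,
\]
so $T(B_1)$ lies in the $p_{\f,y}$-ball of radius $||T||$ about $0$; the bounds for $\alpha_0^*$ and $I_0^*$ then come from (b).

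For (d), I would verify vanishing first on a rank-one operator $\Theta_{y,z}$, whose image $\Theta_{y,z}(B_1)$ lies in the free submodule $z\mathcal A$ and therefore satisfies $\lambda_0(\Theta_{y,z})=0$. Subadditivity from (a) extends this to finite-rank operators, and Corollary \ref{Cor} together with the operator-norm bound $\lambda_0(S)\leq ||S||$ from (c) extends it to all ``compact'' $T$ by norm approximation: if $T_n\to T$ with each $T_n$ finite rank, then $T(B_1)\subseteq T_n(B_1)+||T-T_n||\,B_1$ yields $\lambda_0(T)\leq\lambda_0(T_n)+||T-T_n||\to 0$, and the other MNCs vanish by (b). The remark that the converse may fail is just a pointer to \cite{KeckicLazovic}. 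For (e) I would write $T=(T+K)+(-K)$, apply subadditivity twice, and use $-B_1=B_1$ to conclude $(-K)(B_1)=K(B_1)$, whence $\mu_0^*(-K)=\mu_0^*(K)=0$; the same argument handles $\lambda_0$.

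I do not expect any serious obstacle. The only slightly technical ingredient is the state Cauchy--Schwarz bound $p_{\f,y}(x)\leq ||x||$ appearing in (c); the remaining work is a direct transfer of set-level properties from Sections 2--3 to the operator level via the three containments listed at the start.
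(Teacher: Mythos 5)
Your proposal is correct and follows essentially the same route as the paper: every item is reduced to the set-level properties of $\alpha^*$, $\chi^*$, $I^*$ and $\lambda$ via the containments $(T+S)(B_1)\subseteq T(B_1)+S(B_1)$ and $T(B_1)\subseteq\|T\|B_1$, with (d) obtained by norm-approximation by finite-rank operators and (e) deduced from (d); your direct Cauchy--Schwarz bound $p_{\f,y}(x)\le\|x\|$ for (c) is a harmless substitute for the paper's appeal to the inequality chain in (b). One small imprecision: $z\mathcal A$ need not be a \emph{free} (or even closed) submodule, so in the rank-one step you should instead note that $\sup_{x\in B_1}\|\Theta_{y,z}x-P_n\Theta_{y,z}x\|\le\|z-P_nz\|\,\|y\|\to0$ and invoke Proposition \ref{Calc}, which gives $\lambda_0(\Theta_{y,z})=0$ as claimed.
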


\begin{proof} Part (a) follows easily from Theorem \ref{osobine mere nekompaktnosti}-(d), whereas part (b) follows from
(\ref{chainMNC}) and Proposition \ref{chi_leq_lambda}.

Since $T(B_1)\subseteq B(0;||T||)=||T||B_1$, it follows $\lambda_0(T)\le||T||$ according to Proposition
\ref{UnitBall}. Other inequalities in part (c) follows from part (a).

If $T$ is "compact", then $T$ is norm limit of finite rank operators. Hence $T(B_1)$ is $\mathcal
A$-precompact. Therefore $\lambda_0(T)=0$. This, together with (b) proves (d). The converse does not always
hold due to \cite[Example 5.1.]{KeckicLazovic}.

Finally, (e) follows from (d).
\end{proof}

In the case $\mathcal A=B(H)$, we can obtain more.

\begin{proposition} Let $\mathcal A=B(H)$ and let $T\in B^a(l^2(B(H)))$. Then

\begin{itemize}

\item[(a)] There holds
$$\chi_0^*(T)\leq\lambda_0(T)\leq\sqrt{||T||I_0^*(T)}\leq\sqrt{||T||\alpha_0^*(T)}\leq\sqrt{2||T||\chi_0^*(T)}.$$

\item[(b)] $\mu_0^*(T)=0$, $\mu\in\{\alpha,\chi,I\}$ iff $\lambda_0(T)=0$ iff $T$ is a ''compact'' operator;
\end{itemize}
\end{proposition}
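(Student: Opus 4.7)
The plan is to reduce both claims to previously proved results by taking $E=T(B_1)$. For part (a), the left-most inequality $\chi_0^*(T)\le\lambda_0(T)$ is Proposition \ref{chi_leq_lambda} applied to $E=T(B_1)$, and the two right-most inequalities come from chain (\ref{chainMNC}) after passing to the square root (which is monotone). The central inequality $\lambda_0(T)\le\sqrt{||T||I_0^*(T)}$ is obtained by invoking Theorem \ref{lambda_leq_I} on $E=T(B_1)$, for which we need to verify the two hypotheses: $||T(B_1)||=||T||$ (trivial, from the definition of the operator norm) and $T(B_1)$ being $\mathcal A$-balanced.

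The only genuinely nontrivial step is the $\mathcal A$-balancedness of $T(B_1)$. First, $B_1$ itself is $\mathcal A$-balanced because, for $x\in B_1$ and a unitary $u\in\mathcal A$,
$$||xu||^2=||\skp{xu}{xu}||=||u^*\skp{x}{x}u||=||\skp{x}{x}||\le1,$$
using that conjugation by a unitary is an isometry on $\mathcal A$. Since $T$ is $\mathcal A$-linear, $T(xu)=(Tx)u$; hence if $y=Tx\in T(B_1)$ and $u$ is unitary, then $yu=T(xu)\in T(B_1)$. Theorem \ref{lambda_leq_I} then delivers the desired bound and finishes part (a).

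For part (b), the implication ``$T$ `compact' $\Rightarrow\mu_0^*(T)=\lambda_0(T)=0$'' is recorded in Proposition \ref{osobine_T}(d). Conversely, if $\mu_0^*(T)=0$ for some $\mu\in\{\alpha,\chi,I\}$, chain (\ref{chainMNC}) gives $\chi_0^*(T)=0$, and then part (a) forces $\lambda_0(T)\le\sqrt{2||T||\chi_0^*(T)}=0$. It remains to pass from $\lambda_0(T)=0$ to ``compactness'' of $T$. By Proposition \ref{ConPro}(\ref{CnPro3}), $T(B_1)$ is $\mathcal A$-precompact, so for every $\e>0$ there is a free finitely generated $M\cong\mathcal A^n$ with $\sup_{x\in B_1}d(Tx,M)<\e$; by (F2) this amounts to $||T-P_M T||<\e$. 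Since $M$ is free with orthonormal basis $e_1,\dots,e_n$, writing $y_i=(P_M T)^*e_i$ gives $P_M T=\sum_{i=1}^n\Theta_{y_i,e_i}$, a finite rank operator. Thus $T$ lies in the norm closure of finite rank operators, i.e., is ``compact''.
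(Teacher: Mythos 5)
Your part (a) is correct and follows exactly the paper's route: Proposition \ref{chi_leq_lambda} for the left end, the chain (\ref{chainMNC}) under the square root for the right end, and Theorem \ref{lambda_leq_I} applied to $E=T(B_1)$ in the middle; your explicit verification that $B_1$, and hence $T(B_1)$, is $\mathcal A$-balanced (via $\|u^*\skp xxu\|=\|\skp xx\|$ and $\mathcal A$-linearity of $T$) is exactly the parenthetical remark in the paper, spelled out. For part (b) you diverge in the last step. The paper argues: any $\mu_0^*(T)=0$ forces $\alpha_0^*(T)=0$ by part (a), hence $T(B_1)$ is totally bounded in the topology $\tau$, and then it invokes the converse theorem from \cite{KeckicLazovic} (valid for $\mathcal A=B(H)$) to conclude that $T$ is ``compact''. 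You instead route everything through $\lambda_0$: part (a) gives $\lambda_0(T)=0$, Proposition \ref{ConPro}-(\ref{CnPro3}) gives $\mathcal A$-precompactness of $T(B_1)$, and you finish with an elementary finite-rank approximation $\|T-P_MT\|<\e$. This is more self-contained (it does not lean on the external Theorem 4.10 of \cite{KeckicLazovic}), and note that it still genuinely uses the hypothesis $\mathcal A=B(H)$, since passing from $\chi_0^*(T)=0$ to $\lambda_0(T)=0$ requires part (a). One small imprecision: a free finitely generated submodule need not come with an \emph{orthonormal} basis as embedded in $l^2(\mathcal A)$, so the formula $P_MT=\sum_i\Theta_{y_i,e_i}$ is not immediate as you state it; this is easily repaired either by using Proposition \ref{Calc}-(\ref{Calc2}) to replace $M$ by $P_nl^2(\mathcal A)$, whose standard basis is orthonormal and gives $P_nT=\sum_{i=1}^n\Theta_{T^*e_i,e_i}$, or by recalling that the identity of any finitely generated projective module is itself a finite-rank operator. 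With that fix your argument is complete.
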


\begin{proof}
Part (a) follows from Proposition \ref{osobine_T}-(b) and Theorem \ref{lambda_leq_I}, since $T(B_1)$ is a
balanced set ($y=Tx\in T(B_1)$ implies $yu=T(xu)\in T(B_1)$).

Part (b) follows from part (a) and Proposition \ref{osobine_T}-(d). Indeed, if any of four MNCs annihilate
$T$, then, by part (a), $\alpha^*_0(T)=0$, and hence, $T(B_1)$ is totally bounded in the topology $\tau$. By
\cite[Theorem 4.10.]{KeckicLazovic}, $T$ is "compact".
\end{proof}

\section{Three questions}

\begin{question} It is easy to obtain the following: Let $E\subseteq l^2(\mathcal A)$ be bounded. For any $\e>0$ there is an
$\mathcal A$-precompact set $C_\e$ such that $E\subseteq C_\e+(\lambda(E)+\e)B$. (For instance $C_\e=E\cap M$
for a suitable free finitely generated $M$.)

Is it possible to get something stronger: There is an $\mathcal A$-precompact set $C$ such that $E\subseteq
C+\lambda(E)\cdot B$?
\end{question}

\begin{question} Among all properties of MNCs on a Banach space, it turns out that the most important is
$\mu(\co E)=\mu(E)$. This was proved in this note for $\lambda$ if we $\co E$ regard as a real field convex
hull, i.e.\ $\co E=\{\sum_1^nc_jx_j\;|\;\sum c_j=1, c_j\in{\mathbf R},x_j\in E\}$. However, there is a notion
of $\mathcal A$-convex hull (see for instance \cite{Magajna})
$$\co_{\mathcal A}E=\Big\{\sum_{j=1}^na_j^*x_ja_j\;|\;x_j\in E, a_j\in\mathcal A,\sum a_j^*a_j=1\Big\}.$$

Is it possible to obtain $\lambda(\co_{\mathcal A}E)=\lambda(E)$?
\end{question}

\begin{question} As it was mentioned in Remark \ref{RemarkBalanced}, we ask for the following: Is it true $\mu^*(E)=\mu^*(F)$, where $F$ denotes the balanced hull of $E$, i.e.\ $F=\bigcup_uEu$ (the union runs through all unitaries $u$), and $\mu$ denotes any of Kuratowski, Hausdorff or Istr\u{a}\c{t}escu MNC? 
\end{question}

\smallskip

%{\it Problem 2.} It seems that Lipschitz continuity itself is not enough for the given mapping $T$ to maps
%$A$-precompact sets into $A$-precompact (and furthermore to ensure
%$\lambda(T(E))\le\mathop{\mathrm{Lip}}(T)\lambda(E)$).

%What might be the additional assumption? Maybe $T$ is $A$-differentiable, i.e.\ for any $x$ there is $L\in
%B^a(l^2(\mathcal A))$ such that
%
%$$T(x+h)=Tx+Lh+o(||h||)?$$

\subsection*{Acknowledgement}
The authors was supported in part by the Ministry of education and science, Republic of Serbia, Grant
\#174034.

\bibliographystyle{plain}
\bibliography{MeasuresBibl}

\end{document}